\newcommand\footnoteref[1]{\protected@xdef\@thefnmark{\ref{#1}}\@footnotemark}
\definecolor{grey}{rgb}{0.95,0.95,0.95}
\definecolor{green}{rgb}{0.2,0.6,0.4}
\newcommand{\Psf}{\mathsf{P}}
\newcommand{\Qsf}{\mathsf{Q}}
\newcommand{\biimp}{\leftrightarrow}
\newcommand{\Ccal}{\mathcal{C}}
\newcommand{\Ical}{\mathcal{I}}
\newcommand{\Mcal}{\mathcal{M}}
\newcommand{\Ncal}{\mathcal{N}}
\newcommand{\Qcal}{\mathcal{Q}}
\newcommand{\Rcal}{\mathcal{R}}
\newcommand{\Scal}{\mathcal{S}}
\newcommand{\uh}{{\upharpoonright}}
\renewcommand{\setminus}{\smallsetminus}
\newcommand{\tuple}[1]{\left\langle #1 \right\rangle}
\newcommand{\s}[1]{\ensuremath{\sf{#1}}}
\DeclareMathOperator{\rca}{\s{RCA}_0}
\DeclareMathOperator{\aca}{\s{ACA}_0}
\DeclareMathOperator{\wkl}{\s{WKL}_0}
\DeclareMathOperator{\rwkl}{\s{RWKL}}
\DeclareMathOperator{\rt}{\s{RT}}
\DeclareMathOperator{\srt}{\s{SRT}}
\DeclareMathOperator{\cac}{\s{CAC}}
\DeclareMathOperator{\tto}{\s{TT}}
\DeclareMathOperator{\coh}{\s{COH}}
\definecolor{lightblue}{HTML}{e6e6e6}
\definecolor{lightred}{HTML}{eca6a6}
\definecolor{lightgreen}{RGB}{164,244,140}
\newtheoremstyle{custom}
  {10pt}
  {10pt}
  {\normalfont}
  {}
  {\bfseries}
  {}
  { }
  {}
\theoremstyle{custom}
\newtheorem{theorem}{Theorem}[section]
\newtheorem{lemma}[theorem]{Lemma}
\newtheorem{definition}[theorem]{Definition}
\newtheorem{question}[theorem]{Question}
\newtheorem{corollary}[theorem]{Corollary}
\begin{document}

\title[The strength of the tree theorem for pairs]{The strength of the tree theorem for pairs\\
	in reverse mathematics}
\author{
  Ludovic Patey
}

\begin{abstract}
No natural principle is currently known to be strictly between the arithmetic comprehension axiom
($\aca$) and Ramsey's theorem for pairs ($\rt^2_2$) in reverse mathematics.
The tree theorem for pairs ($\tto^2_2$) is however a good candidate.
The tree theorem states that for every finite coloring over tuples of comparable nodes in
the full binary tree, there is a monochromatic subtree isomorphic to the full tree.
The principle $\tto^2_2$ is known to lie between $\aca$ and~$\rt^2_2$ over~$\rca$, but its exact strength remains open.
In this paper, we prove that $\rt^2_2$ together with weak König's lemma ($\wkl$)
does not imply $\tto^2_2$, thereby answering a question of Mont\'alban. 
This separation is a case in point of the method of Lerman, Solomon and Towsner
for designing a computability-theoretic property which discriminates between two statements in reverse
mathematics. We therefore put the emphasis on the different steps leading to this separation
in order to serve as a tutorial for separating principles in reverse mathematics.
\end{abstract}

\maketitle

\section{Introduction}

``Every sufficiently large collection of objects has an arbitrarily
large sub-collection whose objects satisfy some structural properties''.
This general statement reflects the main idea of Ramsey's theory.
This theory has connections with many areas of mathematics,
namely, combinatorics, model theory or set theory.
One of the most well-known statements is Ramsey's theorem, stating
that for every coloring of the $k$-tuples of integers in finitely many colors,
there is an infinite monochromatic subset.
In this paper, we are interested in the tree theorem for pairs,
a strengthening of Ramsey's theorem for pairs stating that 
for every finite coloring over pairs of comparable nodes in the full binary tree,
there is a monochromatic subtree isomorphic to the full tree.
Our main theorem states that the tree theorem for pairs is strictly stronger 
than Ramsey's theorem for pairs in the sense of \emph{reverse mathematics}.

Reverse mathematics is a mathematical program whose goal is
to classify theorems in terms of their provability strength.
It uses the framework of subsystems of second-order arithmetic,
with the base theory $\rca$ (recursive comprehension axiom).
$\rca$ is composed of the basic first-order Peano axioms,
together with $\Delta^0_1$-comprehension and $\Sigma^0_1$-induction schemes.
$\rca$ is usually thought of as capturing \emph{computational mathematics}.
This program led to two important observations:
First, most ``ordinary'' (i.e.\ non set-theoreric) theorems require only very weak set existence axioms.
Second, many of those theorems are actually \emph{equivalent}
	to one of five main subsystems over $\rca$, known as the ``Big Five''.

Ramsey's theory, among others, provides a large class of theorems escaping this phenomenon.
Indeed, consequences of Ramsey's theorem for pairs usually belong to their own subsystem
and their study is still an active research subject within reverse mathematics.
This article focuses on Ramseyan principles below~$\aca$, the arithmetic comprehension axiom. 
See~\cite{Hirschfeldt2014Slicing} for a good introduction
to reverse mathematics.

\subsection{Ramsey's theorem}

The strength of Ramsey-type statements is notoriously hard to tackle
in the setting of reverse mathematics. The separation of Ramsey's theorem for pairs ($\rt^2_2$)
from the arithmetic comprehension axiom ($\aca$) was a long-standing open
problem, until Seetapun solved it~\cite{Seetapun1995strength} using the notion of cone avoidance.

\begin{definition}[Ramsey's theorem]
A subset~$H$ of~$\omega$ is~\emph{homogeneous} for a coloring~$f : [\omega]^k \to n$ (or \emph{$f$-homogeneous}) 
if each $k$-tuples over~$H$ are given the same color by~$f$. 
$\rt^k_n$ is the statement ``Every coloring $f : [\omega]^k \to n$ has an infinite $f$-homogeneous set''.
\end{definition}

Simpson~\cite[Theorem III.7.6]{Simpson2009Subsystems} proved that whenever~$k \geq 3$ and~$n \geq 2$,
$\rca \vdash \rt^k_n \biimp \aca$. Ramsey's theorem for pairs is probably the most famous 
example of statement escaping the Big Five. Seetapun~\cite{Seetapun1995strength} proved that~$\rt^2_2$ 
is strictly weaker than~$\aca$ over~$\rca$. Because of the complexity of the related separations, 
$\rt^2_2$ received a particular attention from the reverse mathematics community~\cite{Cholak2001strength,
Seetapun1995strength,Jockusch1972Ramseys}. 
Cholak, Jockusch and Slaman~\cite{Cholak2001strength} and Liu~\cite{Liu2012RT22} proved that 
$\rt^2_2$ is incomparable with weak K\"onig's lemma.
Dorais, Dzhafarov, Hirst, Mileti and Shafer~\cite{Dorais2012uniform},
Dzhafarov~\cite{Dzhafarov2014Cohesive}, Hirschfeldt and Jockusch~\cite{Hirschfeldtnotions}
and the author~\cite{Patey2015weakness} studied the computational strength of Ramsey's theorem
according to the number of colors, when fixing the number of applications of the principle.

\subsection{The tree theorem}

There is no natural principle currently known to be strictly between $\aca$
and~$\rt^2_2$. The tree theorem for pairs is however a good candidate.
The tree theorem is a strengthening of Ramsey's theorem in which we do not consider
colorings over tuples of integers, but colorings over tuples of nodes over a binary tree.
Ramsey's theorem can be recovered from the tree theorem by identifying
all nodes at every given level of the tree.

Given a set of binary strings $S \subseteq 2^{<\omega}$, we denote by $[S]^n$
the collection of \emph{linearly ordered} subsets of $S$ of size~$n$,
that is, $n$-sets of strings~$\{\sigma_0, \dots, \sigma_{n-1} \} \subseteq S$ 
such that~$\sigma_i \prec \sigma_{i+1}$ for each~$i < n-1$.

\begin{definition}[Tree theorem]
A subtree $S \subseteq 2^{<\omega}$ is \emph{order isomorphic} to $2^{<\omega}$ (written $S \cong 2^{<\omega}$) 
if there is a bijection $g : 2^{<\omega} \to S$ such that for all $\sigma, \tau \in 2^{<\omega}$,
$\sigma \preceq \tau$ if and only if $g(\sigma) \preceq g(\tau)$.
Given a coloring $f : [2^{<\omega}]^n \to k$, a tree $S$ is $f$-homogeneous if $S \cong 2^{<\omega}$
and $f \uh [S]^n$ is monochromatic.
$\tto^n_k$ is the statement ``Every coloring $f : [2^{<\omega}]^n \to k$ has an $f$-homogeneous tree.''
\end{definition}

Note that if $S \cong 2^{<\omega}$, witnessed by the bijection $g : 2^{<\omega} \to S$,
then $S$ is $g$-computable. Therefore we can consider that $\tto^n$ states the existence of the bijection $g$
instead of the pair~$\tuple{S, g}$.
The tree theorem was first analyzed by McNicholl~\cite{McNicholl1995Inclusion}
and by Chubb, Hirst, and McNicholl~\cite{Chubb2009Reverse}. They proved that~$\tto^2_2$
lies between $\aca$ and~$\rt^2_2$ over~$\rca$, and left open whether any of the implications
is strict. Further work was done by Corduan, Groszek, and Mileti~\cite{Corduan2010Reverse}.
Dzhafarov, Hirst and Lakins~\cite{Dzhafarov2010Ramseys} studied stability notions for the
tree theorem and introduced a polarized variant. Mont\'alban~\cite{Montalban2011Open} asked whether
$\rt^2_2$ implies~$\tto^2_2$ over~$\rca$. We give a negative answer by proving the following stronger theorem,
where $\wkl$ stands for weak K\"onig's lemma.

\begin{theorem}[Main result]
$\rt^2_2 \wedge \wkl$ does not imply~$\tto^2_2$ over~$\rca$.
\end{theorem}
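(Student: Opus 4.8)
The plan is to build an $\omega$-model of $\rt^2_2 \wedge \wkl$ that is not a model of $\tto^2_2$. Following the Lerman--Solomon--Towsner framework, I would isolate a computability-theoretic property $\Pcal$ that (i) is preserved by iterated applications of $\rt^2_2$ and of $\wkl$ (so that closing a set under solutions to instances of these principles stays inside $\Pcal$), but (ii) fails for some computable instance of $\tto^2_2$, in the sense that every $f$-homogeneous tree for that instance must lack $\Pcal$ relative to the inputs. The model is then obtained by the usual back-and-forth bookkeeping: enumerate all instances of $\rt^2_2$ and $\wkl$ appearing in the model-so-far, and at each step adjoin a solution chosen to preserve $\Pcal$; the resulting second-order part is closed under the two principles (plus Turing reducibility and join, and satisfies $\rca$ because it is a Turing ideal), hence models $\rt^2_2 \wedge \wkl$, while the fixed computable $\tto^2_2$-instance has no solution in it.

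The substantive work is to pin down $\Pcal$. The natural candidate adapts the ideas used to separate $\rt^2_2$ from $\aca$ (cone avoidance, and more pointedly the preservation notions developed for $\rt^2_2$ versus weak K\"onig's lemma by Liu and by Cholak--Jockusch--Slaman): one wants a notion along the lines of ``avoiding the construction of a fast-growing / DNR-like object'' or ``preserving non-$\Pi^0_1$-ness of a fixed class'', tailored so that producing an order-isomorphic copy of $2^{<\omega}$ that is monochromatic is impossible. The key point is that an $f$-homogeneous tree $S \cong 2^{<\omega}$ carries, via its isomorphism $g$, a great deal of information: the branching structure of $S$ lets one encode, say, a diagonally non-recursive function or an escape from a $\Pi^0_1$ class, and I would design a computable coloring $f : [2^{<\omega}]^2 \to 2$ so that this encoding is forced. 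Concretely, I would use the levels and the immediate-extension relation in any homogeneous subtree to reconstruct a function that $\Pcal$ forbids, so that no solution to this instance can lie in a $\Pcal$-preserving ideal.

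The two preservation lemmas --- $\rt^2_2$ preserves $\Pcal$ and $\wkl$ preserves $\Pcal$ --- are where I expect the real effort. For $\wkl$ this should follow from (a relativized form of) the low basis theorem or a suitable forcing with $\Pi^0_1$ classes, since such ``smallness'' properties are typically $\wkl$-preservable. For $\rt^2_2$ one would go through the Cholak--Jockusch--Slaman decomposition: reduce to stable colorings plus cohesiveness, handle the cohesive set by a first-jump control argument, and handle stable colorings by the standard two-step argument (pass to the $\Delta^0_2$ limit coloring, then find a homogeneous set inside a prescribed set via Mathias forcing with $\Pcal$-preserving conditions). The delicate part is ensuring the Mathias forcing can always be driven to keep $\Pcal$ while also meeting the homogeneity requirements; this is exactly the place where the choice of $\Pcal$ must be made robust enough to absorb the combinatorics of $\rt^2_2$ yet sharp enough to kill the tree-theorem instance.

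\textbf{Main obstacle.} The crux is the simultaneous fit: $\Pcal$ must be weak enough to be preserved under both $\rt^2_2$ and $\wkl$ --- two principles whose preservation properties are individually subtle --- and strong enough that the branching data of a monochromatic order-copy of $2^{<\omega}$ necessarily violates it. Getting the computable $\tto^2_2$-instance to force the encoding of a $\Pcal$-forbidden object out of \emph{every} homogeneous tree, uniformly, is the technical heart of the argument.
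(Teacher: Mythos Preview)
Your high-level strategy matches the paper exactly: the Lerman--Solomon--Towsner framework, a preservation property $\Pcal$ closed downward under Turing reduction, the $\coh + \srt^2_2$ decomposition for $\rt^2_2$, and Mathias forcing for the stable part. You also correctly locate the crux in the choice of $\Pcal$. However, your concrete candidates for $\Pcal$ will not work, and the reason points to a structural feature you have not anticipated. DNR-avoidance fails immediately since $\srt^2_2$ already implies $\dnr$ over $\rca$; similarly, standard $\Pi^0_1$-class avoidance notions are too coarse to distinguish $\tto^2_2$ from $\rt^2_2$.

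The paper's property, called \emph{fairness}, is bespoke and, crucially, \emph{graded}: a set $X$ is $n$-fair for a fixed $\Delta^0_2$ partition $A_0 \cup A_1 = 2^{<\omega}$ if every $\Sigma^{0,X}_1$ ``$m$-by-$2^n m$ formula'' essential in a disjoint matrix of incomparable strings admits a valuation that simultaneously meets $A_0$ and $A_1$ on each row. The point you are missing is that $\rt^2_2$ does \emph{not} preserve $n$-fairness for any fixed $n$; rather, solving an $\rt^1_2$-instance over an $n$-fair set yields only an $(n{+}1)$-fair solution (the two branches of the Mathias forcing double the required row width). Fairness is then defined as $n$-fairness for \emph{some} $n$, and it is this union over $n$ that $\rt^2_2$ preserves. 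By contrast, $\wkl$ and $\coh$ preserve $n$-fairness at each fixed level, and any $\tto^2_2$-solution to the instance built from $A_0, A_1$ fails $n$-fairness for every $n$. So your instinct that $\Pcal$ must be ``robust enough to absorb the combinatorics of $\rt^2_2$'' is right, but the mechanism is a controlled degradation along a hierarchy, not outright preservation of a single notion.
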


The separation builds on the forcing method introduced by Lerman, Solomon and Towsner~\cite{Lerman2013Separating},
and enhanced by the author~\cite{Patey2015Iterative},
for designing a computability-theoretic property which discriminates between two statements in reverse
mathematics. The construction being quite complex, we present the proof step by step, hoping that our exposition
can serve as a tutorial for separating principles in reverse mathematics.

\subsection{Separating principles in reverse mathematics}\label{subsect:separating-rm}

An \emph{$\omega$-structure} is a structure $\Mcal = (\omega, \Scal, +, \cdot, <)$
where $\omega$ is the set of standard integers, $+$, $\cdot$ and $<$ 
are the standard operations over integers and $\Scal$ is a set of reals
such that~$\Mcal$ satisfies the axioms of~$\rca$.
Friedman~\cite{Friedman1974Some} characterized the second-order parts~$\Scal$ of~$\omega$-structures
as those forming a \emph{Turing ideal}, that is, a set of reals closed under Turing join
and downward-closed under the Turing reduction.
Given two principles~$\Psf$ and~$\Qsf$, proving that $\Psf$ does not imply~$\Qsf$
over~$\rca$ usually consists in constructing a Turing ideal~$\Ical$ in which~$\Psf$ holds,
but not~$\Qsf$.

Many theorems in reverse mathematics are~$\Pi^1_2$ statements, i.e., of the form $(\forall X)(\exists Y)\Phi(X, Y)$
where $\Phi$ is an arithmetic formula.
They can be considered as \emph{problems} which usually come with a natural
class of~\emph{instances}. Given an instance~$X$, a set~$Y$ such that~$\Phi(X,Y)$ holds
is called a~\emph{solution} to~$X$. For example, an instance of~$\rt^2_2$ is a coloring
$f : [\omega]^2 \to 2$, and a solution to~$f$ is an infinite $f$-homogeneous set.
In this setting, the construction of an~$\omega$-model of~$\Psf$ which is not a model of~$\Qsf$
consists in creating a Turing ideal~$\Ical$ together with a fixed $\Qsf$-instance $I \in \Ical$,
such that every $\Psf$-instance $J \in \Ical$ has a solution in~$\Ical$,
whereas $I$ contains no solution in~$\Ical$.
Building a Turing ideal is usually achieved via the following technique.

\begin{itemize}
  \item[1.] Choose a particular $\Qsf$-instance $I$ admitting no $I$-computable solution.
  \item[2.] Start with the Turing ideal $\Ical_0 = \{Z : Z \leq_T I\}$.
  \item[3.] Given a Turing ideal $\Ical_n$ containing no solution to $I$, 
  take any $\Psf$-instance $J \in \Ical_n$ having no solution in $\Ical_n$ and
  add a solution $Y$ to $J$. The closure by Turing reducibility and join gives $\Ical_{n+1}$.
  \item[4.] Repeat step 3 to obtain a Turing ideal $\Ical = \bigcup_n \Ical_n$
  such that every $\Psf$-instance in $\Ical$ admits a solution in $\Ical$.
\end{itemize}

The difficulty of such a construction is to avoid adding a solution to the instance $I$ in $\Ical_{n+1}$
during step~3. One needs to ensure that every $\Psf$-instance in $\Ical_n$ admits
a solution $Y$ such that $Y \oplus C$ \emph{avoids} computing a solution to $I$ for each $C \in \Ical_n$.

Assuming that $\Ical_n$ does not contain a solution to~$I$ is sometimes not sufficient
to ensure the existence of a solution~$Y$ to the next $\Psf$-instance such that 
the ideal closure of~$\Ical_n \cup \{Y\}$ does not contain such a solution as well.
A core step of the separation of~$\Psf$ from~$\Qsf$ consists in designing the
computability-theoretic property that we will propagate from~$\Ical_n$ to~$\Ical_{n+1}$
and which will ensure in particular that~$I$ has no solution in~$\Ical_{n+1}$.
This property strongly depends on the nature of the principles~$\Psf$ and~$\Qsf$.

Lerman, Solomon and Towsner~\cite{Lerman2013Separating} introduced a general technique
for designing such a property. Their framework has been successfuly reused
to separate various principles in reverse mathematics~\cite{Flood2014Separating,Patey2013note,Patey2015Ramsey}.
Recently, the author~\cite{Patey2015Iterative} refined their technique to make it more lightweight and modular.
Once simplified, a separation between two statements~$\Psf$ and~$\Qsf$ using the framework of Lerman, Solomon and Towsner
yields a computability-theoretic property called \emph{fairness property}. 
This property is closed downward under the Turing reduction
and is \emph{preserved} by the statement~$\Psf$, that is, for every \emph{fair} set~$X$ and every $X$-computable $\Psf$-instance~$I$,
there is a solution~$Y$ to~$I$ such that~$X \oplus Y$ is fair. This property is designed so that it is not preserved by~$\Qsf$,
which enables one to build an $\omega$-model of~$\Psf$ in which~$\Qsf$ does not hold.
Note that ``fairness property'' is a generic appelation for the computability-theoretic property yielded by the construction
of Lerman, Solomon and Towsner.  Different statements give different fairness properties,
such as hyperimmunity~\cite{Patey2015Iterative}, $\cac$-fairness~\cite{Patey2016reverse} or again $\tto$-fairness~\cite{Frittaion2015Coloring}. 
 
In this paper, we shall take the case of the separation of Ramsey's theorem for pairs from 
the tree theorem for pairs to make explicit the different steps leading to the separation
of two principles. In particular, we shall focus on the design of the discriminating property.

\subsection{Definitions and notation}

\emph{String, sequence}.
Fix an integer $k \in \omega$.
A \emph{string} (over $k$) of length~$n$ is an ordered tuple of integers $a_0, \dots, a_{n-1}$
(such that $a_i < k$ for every $i < n$). The empty string is written $\varepsilon$. A \emph{sequence}  (over $k$)
is an infinite listing of integers $a_0, a_1, \dots$ (such that $a_i < k$ for every $i \in \omega$).
Given $s \in \omega$,
$k^s$ is the set of strings of length $s$ over~$k$ and
$k^{<s}$ is the set of strings of length $<s$ over~$k$. Similarly,
$k^{<\omega}$ is the set of finite strings over~$k$
and $k^{\omega}$ is the set of sequences (i.e. infinite strings)
over~$k$. 
Given a string $\sigma \in k^{<\omega}$, we denote by $|\sigma|$ its length.
Given two strings $\sigma, \tau \in k^{<\omega}$, $\sigma$ is a \emph{prefix}
of $\tau$ (written $\sigma \preceq \tau$) if there exists a string $\rho \in k^{<\omega}$
such that $\sigma \rho = \tau$. Given a sequence $X$, we write $\sigma \prec X$ if
$\sigma = X \uh n$ for some $n \in \omega$, where $X \uh n$ denotes the restriction of $X$ to its first $n$ elements.
A \emph{binary string} (resp.\ binary sequence) is a \emph{string} (resp.\ sequence) over $2$.
We may identify a binary sequence with a set of integers by considering that the sequence is its characteristic function.

\emph{Tree, path}.
A tree $T \subseteq k^{<\omega}$ is a set downward-closed under the prefix relation.
A \emph{binary} tree is a tree $T \subseteq 2^{<\omega}$.
A sequence $P \in \omega^\omega$ is a \emph{path} though~$T$ if for every $\sigma \prec P$,
$\sigma \in T$. A string $\sigma \in k^{<\omega}$ is a \emph{stem} of a tree $T$
if every $\tau \in T$ is comparable with~$\sigma$.
Given a tree $T$ and a string $\sigma \in T$,
we denote by $T^{[\sigma]}$ the subtree $\{\tau \in T : \tau \preceq \sigma \vee \tau \succeq \sigma\}$.

\emph{Sets, partitions}.
Given two sets $A$ and $B$, we denote by $A < B$ the formula
$(\forall x \in A)(\forall y \in B)[x < y]$
and by $A \subseteq^{*} B$ the formula $(\forall^{\infty} x \in A)[x \in B]$,
meaning that $A$ is included in $B$ \emph{up to finitely many elements}.
Given a set~$X$ and some integer~$k$, a~\emph{$k$-cover of~$X$}
is a $k$-uple $A_0, \dots, A_{k-1}$ such that~$A_0 \cup \dots \cup A_{k-1} = X$.
We may simply say~\emph{$k$-cover} when the set~$X$ is unambiguous. 
A \emph{$k$-partition} is a $k$-cover whose sets are pairwise disjoint.
A \emph{Mathias condition} is a pair $(F, X)$
where $F$ is a finite set, $X$ is an infinite set
and $F < X$.
A condition $(F_1, X_1)$ \emph{extends } $(F, X)$ (written $(F_1, X_1) \leq (F, X)$)
if $F \subseteq F_1$, $X_1 \subseteq X$ and $F_1 \setminus F \subset X$.
A set $G$ \emph{satisfies} a Mathias condition $(F, X)$
if $F \subset G$ and $G \setminus F \subseteq X$.

\section{Partitioning trees and strong reducibility}\label{sect:partitions-reducibility}

In order to get progressively into the framework
used to separate Ramsey's theorem for pairs from the tree theorem for pairs,
we shall first study the singleton version of the considered principles.
The fairness property that we shall design during the next section is directly
obtained by abstracting and generalizing the diagonalization argument of this section.
Ramsey's theorem for singletons is simply the infinite pigeonhole principle,
stating that for every finite partition of an infinite set,
one of its parts has an infinite subset.
Both $\rt^1_k$ and~$\tto^1_k$ are computably true and provable over~$\rca$.
We shall therefore study non-computable instances of~$\rt^1_k$ and~$\tto^1_k$
to see how their combinatorics differ.

As explained in subsection~\ref{subsect:separating-rm}, a proof of implication
from~$\Psf$ to~$\Qsf$ over~$\rca$ may involve multiple applications
of~$\Psf$. Therefore, if we want to prove that~$\Psf$ does not imply~$\Qsf$ over $\rca$,
we need to create an instance of~$\Qsf$ diagonalizing against
successive applications of~$\Psf$. In order to simplify our argument,
we shall first describe a one-step diagonalization between a $\Delta^0_2$ instance
of~$\tto^1_2$ and arbitrary instances of~$\rt^1_k$, that is, with no effectiveness restriction.
This is the notion of \emph{strong computable non-reducibility}.

\begin{definition}[Computable reducibility] Fix two~$\Pi^1_2$ statements~$\Psf$ and $\Qsf$.
$\Psf$ is \emph{strongly computably reducible} to a~$\Qsf$ (written~$\Psf \leq_{sc} \Qsf$)
if every~$\Psf$-instance~$I$ computes a~$\Qsf$-instance~$J$ such that every solution to~$J$
computes a solution to~$I$.
\end{definition}

The remainder of this section will be dedicated to proving that~$\tto^1_2 \not \leq_{sc} \rt^1_2$.
More precisely, we shall prove the following stronger theorem.

\begin{theorem}\label{thm:rt12-comp-not-reduc}
There exists a $\Delta^0_2$ $\tto^1_2$-instance $A_0 \cup A_1 = 2^{<\omega}$
such that for every (non-necessarily computable) $\rt^1_2$-instance $B_0 \cup B_1 = \omega$,
there is an infinite set homogeneous for the~$B$'s which does not compute
a $\tto^1_2$-solution to the~$A$'s.
\end{theorem}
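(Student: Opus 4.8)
The plan is to build the $\tto^1_2$-instance $A_0 \cup A_1 = 2^{<\omega}$ by a $\Delta^0_2$ (i.e.\ $0'$-computable) construction that diagonalizes against every pair $(e,i)$, where $e$ is an index for a potential Turing functional and $i < 2$ is a ``guessed color'' of the $\rt^1_2$-instance. The key structural observation is that a $\tto^1_2$-solution to the $A$'s is a bijection $g : 2^{<\omega} \to S$ with $S \cong 2^{<\omega}$ and $g([S]^{\,1}) \subseteq A_j$ for some fixed $j < 2$; in particular $S$ must be an \emph{infinite} subtree of $2^{<\omega}$ order-isomorphic to the full tree, all of whose nodes lie in $A_j$. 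So it suffices to arrange that no infinite monochromatic set $H$ for an arbitrary $2$-partition $B_0 \cup B_1 = \omega$ can compute such an embedded copy of $2^{<\omega}$ inside either $A_0$ or $A_1$.

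First I would set up the requirements. For each $(e,i)$, let $\Phi_e$ be the $e$-th Turing functional; the requirement $\mathcal R_{e,i}$ asks: \emph{if} $\Phi_e^{H}$ is total and codes (a bijection $g$ onto) a subtree $S \cong 2^{<\omega}$ for some infinite set $H$, \emph{then} $S \not\subseteq A_0$ and $S \not\subseteq A_1$. Crucially, by the pigeonhole principle any $\rt^1_2$-instance $B_0\cup B_1=\omega$ has an infinite homogeneous set inside \emph{whichever} part we like among those that happen to be infinite, and we get to choose the homogeneous set after seeing the construction — so the diagonalization is against \emph{all} infinite sets $H$ at once, not against a particular effectively-given one (this is exactly what ``strong'' and ``non-reducibility'' buy us, and why there is no effectiveness hypothesis on the $B$'s). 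The construction proceeds in stages, at stage $s$ devoted to $\mathcal R_{e,i}$: we have so far committed finitely many strings to $A_0$ and to $A_1$, and there remain a full subtree's worth of ``free'' strings above some level. Using $0'$ we ask whether there is a finite set $F$ and a level $n$ such that $\Phi_e^{F}$, assuming $F$ will be extended to an infinite homogeneous set avoiding the complement of part $i$, forces a string $\sigma$ of the embedded tree $S$ to lie in the currently-free region; if so, we \emph{kill} that potential copy by throwing $\sigma$ into $A_{1-j}$ (both values of $j$ handled in turn, using two sub-stages), while keeping enough free strings above to maintain the invariant that the undecided part of $2^{<\omega}$ still contains a full subtree; if no such $F$ exists, then $\Phi_e^{H}$ can never produce a completed copy of $2^{<\omega}$ in $A_j$ for any infinite $H$, and the requirement is vacuously met.

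The main obstacle — and the reason this is a genuine theorem rather than a triviality — is the tension between two demands: to kill a threatened copy of $2^{<\omega}$ we must commit infinitely often to putting nodes into $A_0$ and infinitely often into $A_1$, yet we must simultaneously guarantee that the \emph{final} $A_0$ and $A_1$ between them leave no embedded full subtree uncommitted in a single color that some $\Phi_e^H$ could hit (otherwise $\tto^1_2$ would not ``see'' our diagonalization in reverse). The resolution is a careful bookkeeping of a decreasing sequence of full subtrees $T_0 \supseteq T_1 \supseteq \cdots$ of $2^{<\omega}$, each $\cong 2^{<\omega}$, such that at stage $s$ all of $T_s$ is still free; when handling $\mathcal R_{e,i}$ we only ever commit nodes \emph{outside} some still-free sub-subtree $T_{s+1} \subseteq T_s$, and we argue — using that $\Phi_e^H$, to produce a copy of $2^{<\omega}$, must eventually enumerate branching at every node, hence must eventually reach into $T_{s+1}$ where we will again have the opportunity to intercept it at the next relevant stage — that the limit $A_0, A_1$ defeats every functional. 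Verifying that this interleaving actually terminates each requirement (i.e.\ that only finitely many interceptions per $(e,i)$ are needed, by a $\Sigma^0_1$-over-$0'$ compactness argument on the finitely-branching search for $F$) and that $A = A_0 \oplus A_1$ is genuinely $\Delta^0_2$ is the technical heart; everything else is routine Mathias-style forcing bookkeeping.
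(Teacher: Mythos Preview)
Your proposal has a genuine gap at the level of the requirements. You write that it ``suffices to arrange that no infinite monochromatic set $H$ for an arbitrary $2$-partition $B_0 \cup B_1 = \omega$ can compute such an embedded copy,'' and your requirement $\mathcal R_{e,i}$ accordingly asks that for \emph{every} infinite $H$, $\Phi_e^H$ fails to output a monochromatic copy of $2^{<\omega}$. But every infinite set is monochromatic for some $2$-partition (take $B_0 = H$), so this is equivalent to demanding that \emph{no} infinite set compute a $\tto^1_2$-solution to the $A$'s. That is impossible for a $\Delta^0_2$ instance: $\emptyset'$ computes $A_0, A_1$, and since $\tto^1_2$ holds computably relative to the instance, $\emptyset'$ computes a solution. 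So the requirements as you state them cannot all be met, and the ``decreasing subtrees $T_s$'' bookkeeping does not address this; the obstacle is not where you locate it.

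The theorem only asks that for each $B_0 \cup B_1$ there \emph{exist} one homogeneous set $H$ that fails to compute a solution, and the paper exploits this existential by building, for each fixed $B$, a specific set $G$ via Mathias forcing (with reservoirs in a fixed low Scott set) and working with both candidates $G \cap B_0$, $G \cap B_1$ simultaneously. The requirements are disjunctive: for each pair $e_0, e_1$ one forces that \emph{either} $\Phi_{e_0}^{G\cap B_0}$ \emph{or} $\Phi_{e_1}^{G\cap B_1}$ is partial or outputs a level meeting both $A_0$ and $A_1$; a pairing argument then gives one side that works for all indices. The single $\Delta^0_2$ construction of the $A$'s handles all $B$'s at once by enumerating Mathias conditions $(F,X)$ together with guesses $E_0 \sqcup E_1 = F$ for $F\cap B_0, F\cap B_1$, and asking the $\Sigma^{0,X}_1$ question ``for every $2$-partition $Z_0 \cup Z_1 = X$, is there a side $i$ and a finite $G \subseteq Z_i$ making $\Phi_{e_i}^{E_i \cup G}$ output a large level?''. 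A negative answer yields (via $\wkl$ in the Scott set) a partition one of whose sides forces partiality; a positive answer lets one choose, using a small combinatorial lemma about pairwise incomparable strings across several antichains, nodes to place in $A_0$ and $A_1$ so that the output level is bichromatic. Your plan is missing both the disjunctive requirement structure and this universal quantification over $2$-partitions, which are the actual engine of the argument.
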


In section~\ref{sect:separating-principles}, we will prove a theorem
which implies Theorem~\ref{thm:rt12-comp-not-reduc}. Therefore we shall focus on the key ideas
of the construction rather than on the technical details.

\smallskip
\emph{Requirements}. Let us first assume that we have constructed our $\tto^1_2$ instance~$A_0 \cup A_1 = 2^{<\omega}$.
Fix some 2-partition~$B_0 \cup B_1 = \omega$. We will construct by forcing an infinite set~$G$
such that both $G \cap B_0$ and~$G \cap B_1$ are infinite. Either $G \cap B_0$, or~$G \cap B_1$ will be taken
as our solution to the~$\rt^1_2$-instance $B_0 \cup B_1 = \omega$. We only need one solution to the~$\rt^1_2$-instance.
However, we will be only able to ensure that either $G \cap B_0$, or $G \cap B_1$ will not compute a solution to the 
$\tto^1_2$-instance~$A_0 \cup A_1 = 2^{<\omega}$. 
Therefore, among~$G \cap B_0$ and~$G \cap B_1$, only the one which does not compute a solution to the~$\tto^1_2$-instance
will be the desired solution to our $\rt^1_2$-instance.
Here, by ``solution to the~$\tto^1_2$-instance'', we mean an infinite subtree isomorphic to~$2^{<\omega}$ which is included
in~$A_0$ or~$A_1$.

Let~$\Phi_0, \Phi_1, \dots$ be an enumeration of all partial tree functionals isomorphic to $2^{<\omega}$, that is,
if $\Phi^X(n)$ halts, then $\Phi^X(n)$ outputs $2^n$ pairwise incomparable strings representing the $n$th level
of the tree. We ensure that the following requirements hold
for every pair of indices~$e_0,e_1$.
\[
  \Qcal_{e_0, e_1} : \hspace{20pt} 
		\Rcal_{e_0}^{G \cap B_0} \hspace{20pt} \vee \hspace{20pt}  \Rcal_{e_1}^{G \cap B_1}
\]
where $\Rcal_e^H$ is the statement
\begin{quote}
Either~$\Phi_e^H$ is partial, or~$\Phi_e^H(n)$ is a set~$D$ of $2^n$ incomparable
strings intersecting both~$A_0$ and~$A_1$ for some~$n$.
\end{quote}
We call any $\Qcal_{e_0, e_1}$ a \emph{$\Qcal$-requirement} and for a given~$H$ (being either~$G \cap B_0$, or $G \cap B_1$),
we call any~$\Rcal_e^H$ an \emph{$\Rcal$-requirement for~$H$}.
If every~$\Qcal$-requirement is satisfied, then by the usual pairing argument, either every~$\Rcal$-requirement
is satisfied for~$G \cap B_0$, or every $\Rcal$-requirement is satisfied for~$G \cap B_1$. 
Call $H$ a set among $G \cap B_0$ and~$G \cap B_1$ for which every $\Rcal$-requirement is satisfied.
We claim that $H$ does not compute a solution to the~$\tto^1_2$-instance $A_0 \cup A_1 = 2^{<\omega}$.
Suppose for the sake of contradiction that $H$ computes a tree~$S \cong 2^{<\omega}$ using some procedure~$\Phi_e$.
By the requirement $\Rcal_e^H$, $S$ intersects both~$A_0$ and~$A_1$, and therefore $S$ is not a $\tto^1_2$-solution to the~$A$'s.

\smallskip
\emph{Forcing}. The forcing conditions are Mathias conditions, that is, an ordered pair~$(F, X)$, 
where~$F$ is a finite set of integers, $X$ is an infinite set belonging to some fixed Scott set $\Scal$,
and such that $max(F) < min(X)$.
A \emph{Scott set} Turing ideal satisfying weak König's lemma. By Simpson~\cite[Theorem~VIII.2.17]{Simpson2009Subsystems}, we can
choose~$\Scal$ so that $\Scal = \{ X_i : C = \bigoplus_i X_i \}$ for some low set $C$. This precision will be useful
during the construction of the~$\tto^1_2$-instance. We furthermore assume that~$C$ does not compute a $\tto^1_2$-solution to the~$A$'s,
and therefore that there is no $C$-computable infinite set homogeneous for the~$B$'s, otherwise we are done.

The following lemma ensures that we can force both~$G \cap B_0$ and~$G \cap B_1$ to be infinite,
assuming that the $B$'s have no infinite $C$-computable homogeneous set.

\begin{lemma}\label{lem:rt12-comp-reduc-force-N}
Given a condition~$c = (F, X)$ and some side~$i < 2$, 
there is an extension~$d = (E, Y)$ such that~$|E \cap B_i| > |F \cap B_i|$.
\end{lemma}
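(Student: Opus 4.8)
The plan is to reduce the lemma to a single observation: that the reservoir $X$ must meet $B_i$ infinitely often. Once this is known, the desired extension is obtained by simply adjoining one element of $X \cap B_i$ to $F$ and then shrinking $X$ above that element.

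First I would show that $X \cap B_i$ is infinite. Suppose not; then $X \subseteq^{*} B_{1-i}$, so there is a threshold $n$ for which $X' := \{y \in X : y \geq n\}$ is entirely contained in $B_{1-i}$. This $X'$ is infinite and homogeneous for the $B$'s. Since $X$ belongs to the Scott set $\Scal$ and every member of $\Scal$ is $C$-computable (being a column of the low set $C = \bigoplus_i X_i$), the set $X'$ is $C$-computable as well. This contradicts the standing assumption that the $B$'s have no infinite $C$-computable homogeneous set.

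Having established this, I would pick any $x \in X \cap B_i$; since $\max(F) < \min(X) \leq x$ we have $x \notin F$. Put $E = F \cup \{x\}$ and $Y = \{y \in X : y > x\}$. Then $Y$ is infinite, because $X$ has infinitely many elements above $x$; moreover $Y \leq_T X$, so $Y \in \Scal$ as $\Scal$ is downward closed under Turing reducibility; and $\max(E) = x < \min(Y)$, while $E \setminus F = \{x\} \subseteq X$. Hence $d = (E, Y)$ is a genuine Mathias extension of $c = (F, X)$, and $|E \cap B_i| = |F \cap B_i| + 1 > |F \cap B_i|$, which is exactly what is required.

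The only step carrying any weight is the first one, and it is also the only place where the hypotheses arranged earlier in the construction intervene: the failure of the lemma would manufacture an infinite $C$-computable set homogeneous for the $B$'s out of a tail of the $C$-computable reservoir $X$, contradicting the assumption we imposed on $C$. The rest is routine bookkeeping with Mathias conditions.
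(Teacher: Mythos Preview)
Your proposal is correct and follows essentially the same route as the paper: show that $X$ meets $B_i$ (otherwise $X$, or a tail of it, is an infinite $C$-computable subset of $B_{1-i}$, contradicting the standing assumption), then adjoin a single element of $X \cap B_i$ to $F$ and shrink the reservoir above it. The only difference is cosmetic: the paper argues directly that $X \cap B_i \neq \emptyset$ (which is all that is needed), whereas you prove the stronger fact that $X \cap B_i$ is infinite and spell out more of the Mathias bookkeeping.
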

\begin{proof}
If $X \cap B_i = \emptyset$ then $X$ is an infinite $C$-computable subset of~$B_{1-i}$, contradicting
our assumption. So there is some~$x \in X \cap B_i$. Take~$d = (F \cup \{x\}, X \setminus [0,x])$
as the desired extension.
\end{proof}

The next step consists in forcing the~$\Qcal$-requirements to be satisfied.
A condition~$c$ \emph{forces} a requirement~$\Qcal_{e_0,e_1}$ if $\Qcal_{e_0,e_1}$
holds for every set~$G$ satisfying~$c$. Of course, we cannot force the~$\Qcal$-requirements 
for any $\tto^1_2$-instance~$A_0 \cup A_1 = 2^{<\omega}$ since some of them admit computable solutions.
We must therefore choose our $\tto^1_2$-instance carefully. For now, simply assume that we managed
to build a $\tto^1_2$-instance $A_0 \cup A_1 = 2^{<\omega}$ satisfying the following property (P).
We will detail its construction later.

\begin{quote}(P) 
Given a condition~$c = (F, X)$ and some indices~$e_0,e_1$,
there is an extension~$d$ of~$c$ forcing~$\Qcal_{e_0,e_1}$.
\end{quote}

Assuming that the property (P) holds, we now show how to build
our infinite set~$G$ from it. After that, we will construct a $\tto^1_2$-instance~$A_0 \cup A_1 = 2^{<\omega}$
so that the property (P) is satisfied.

\smallskip
\emph{Construction}.
Thanks to Lemma~\ref{lem:rt12-comp-reduc-force-N} and the property (P), 
we can define an infinite, decreasing sequence of conditions~$(\emptyset, \omega) \geq c_0 \geq c_1 \dots$
such that for each~$s \in \omega$
\begin{itemize}
	\item[(i)] $|F_s \cap B_0| \geq s$ and~$|F_s \cap B_1| \geq s$
	\item[(ii)] $c_s$ forces~$\Qcal_{e_0,e_1}$ if~$s = \tuple{e_0,e_1}$
\end{itemize}
where~$c_s = (F_s, X_s)$. The set~$G = \bigcup_s F_s$ is such that
both $G \cap B_0$ and~$G \cap B_1$ are infinite by (i),
and either~$G \cap B_0$ or~$G \cap B_1$ does not compute a $\tto^1_2$-solution
to the~$A$'s by~(ii). The set among~$G \cap B_0$ and~$G \cap B_1$ which does not compute a $\tto^1_2$-solution
to the~$A$'s is our desired $\rt^1_2$-solution to the~$B$'s. We now need to satisfy the property (P).

\smallskip
\emph{Satisfying the property (P)}.
Given a condition, the extension stated in the property (P)
cannot be ensured for an arbitrary $\tto^1_2$-instance~$A_0 \cup A_1 = 2^{<\omega}$.
We must design the $A$'s so that the property (P) holds.
To do so, we will apply the ideas developped by Lerman, Solomon and Towsner~\cite{Lerman2013Separating}.
We can see the construction of the set~$G$ as a game. The opponent is the~$\tto^1_2$-instance which
will try everything, not to be diagonalized against. However, the opponent is \emph{fair}, in the sense that
if we have infinitely many occasions to diagonalize against him, then he will let us do it.
More precisely, if given a condition~$c = (F, X)$ and some indices~$e_0,e_1$, we can find extensions
which makes both~$\Phi^{G \cap B_0}_{e_0}$ and~$\Phi^{G \cap B_1}_{e_1}$ produce arbitrarily large initial segments of the output, 
then one of those outputs will intersect both~$A_0$ and~$A_1$. In this case, we will have succeeded to satisfy~(P) for the condition~$c$
by producing an extension~$d$ forcing $\Phi^{G \cap B_i}_{e_i}$ to intersect both~$A_0$ and~$A_1$ for some~$i < 2$,
and therefore forcing~$\Qcal_{e_0, e_1}$. In the other case, we will also have vacuously succeeded since
we will not be able to find extensions making both $\Phi^{G \cap B_0}_{e_0}$ and $\Phi^{G \cap B_1}_{e_1}$ produce arbitrarily large initial segments of the output tree, and therefore $c$ is already a condition forcing one $\Phi^{G \cap B_0}_{e_0}$ 
and $\Phi^{G \cap B_1}_{e_1}$ to be partial, so forcing~$\Qcal_{e_0, e_1}$.

We now describe how to construct a fair $\tto^1_2$-instance. The construction of the~$A$'s
will be $\Delta^{0,C}_2$, hence~$\Delta^0_2$ since~$C$ is low. The access to the oracle~$C$ enables us to
\emph{code} the conditions $c = (F, X)$ into finite objects, namely, pairs~$(F, i)$ so that~$C = \bigoplus_i X_i$ and $X = X_i$,
and to enumerate them $C$-computably. More precisely, we will enumerate all 6-tuples
$\tuple{F, i, E_0, E_1, e_0, e_1}$, where~$(F, X_i)$ is a \emph{preconditions}, that is, a condition where
we drop the constraint that~$X_i$ is infinite since it requires too much computational power to know it,
$E_0 \sqcup E_1 = F$ represents a \emph{guess} of the sets~$F \cap B_0$ and~$F \cap B_1$,
and~$e_0, e_1$ denote the~$\Qcal_{e_0, e_1}$ we want to force.
In particular, among those 6-tuples enumerated, we will enumerate $\tuple{F, i, F \cap B_0, F \cap B_1, e_0, e_1}$
for all the true conditions $(F, X_i)$.

The construction of the~$A$'s is done by stages.
At stage~$s$, we have constructed two sets~$A_{0,s} \cup A_{1,s} = 2^{<q}$ for some~$q \in \omega$.
We want to satisfy the property (P) given a 6-tuple $\tuple{F, i, E_0, E_1, e_0, e_1}$,
that is, given a precondition~$c = (F, X)$, a guess of $F \cap B_0$ and $F \cap B_1$, and a pair of indices~$e_0, e_1$.
If any of $\Phi^{E_0}_{e_0}(2)$ and $\Phi^{E_1}_{e_1}(2)$ is not defined, do nothing and go to the next stage.
We can restrict ourselves without loss of generality to preconditions such that both $\Phi^{E_0}_{e_0}(2)$
and $\Phi^{E_1}_{e_1}(2)$ are defined. Indeed, if in the property (P),
the condition~$c$ has no such extension, then~$c$ already forces
either $\Phi^{G \cap B_0}_{e_0}$ or $\Phi^{G \cap B_1}_{e_1}$ to be partial and therefore
vacuously forces~$\Qcal_{e_0,e_1}$. The choice of ``2'' as input seems arbitrary. It has not been picked
randomly and this choice will be justified in the next paragraph.

Let~$D_0$ and $D_1$ be the 4-sets of pairwise incomparable strings outputted by 
$\Phi^{E_0}_{e_0}(2)$ and $\Phi^{E_1}_{e_1}(2)$, respectively.
Altough the strings are pairwise incomparable \emph{within} $D_0$ or $D_1$,
there may be two comparable strings in $D_0 \cup D_1$. However,
by a simple combinatorial argument, we may always find two strings~$\sigma_0, \tau_0 \in D_0$ and
$\sigma_1, \tau_1 \in D_1$ such that $\sigma_0, \tau_0, \sigma_1$ and $\tau_1$
are pairwise incomparable (see Lemma~\ref{lem:matrix-combi}). Here, we use the fact that
on input 2, the sets have cardinality 4, which is enough to apply Lemma~\ref{lem:matrix-combi}.
We are now ready to ask the main question.

\smallskip
``Is it true that for every $2$-partition~$Z_0 \cup Z_1 = X$, there is some side~$i < 2$
and some set~$G \subseteq Z_i$ such that $\Phi^{E_i \cup G}_{e_i}(q)$ halts?''
\smallskip

Note that the question looks $\Pi^{1,X}_2$, but is in fact $\Sigma^{0,X}_1$ by a compactness argument.
It is therefore $C'$-decidable since~$X \in \Scal$ and so can be uniformly decided during the construction.
We have two cases.

Case 1: The answer is negative.
In this case, the $\Pi^{0,X}_1$ class $\Ccal$ of all sets $Z_0 \oplus Z_1$
such that $Z_0 \cup Z_1 = X$ and for every~$i < 2$ and every set~$G \subseteq Z_i$,
$\Phi^{E_i \cup G}_{e_i}(q) \uparrow$ is non-empty. In this case,
we do nothing and claim that the property (P) holds for~$c$.
Indeed, since~$\Scal$ is a Scott set containing $X$, there is some~$Z_0 \oplus Z_1 \in \Ccal \cap \Scal$
such that $Z_0 \cup Z_1 = X$. As $X$ is infinite, there is some~$i < 2$ such that~$Z_i$ is infinite.
In this case, if~$E_0 = F \cap B_0$ and~$E_1 = F \cap B_1$,
$d = (F, Z_i)$ is an extension forcing~$\Phi^{(G \cap B_i)}_{e_i}(q) \uparrow$
and therefore forcing~$\Qcal_{e_0,e_1}$. Note that this extension
cannot be found $C'$-computably since it requires to decide which of~$Z_0$ and~$Z_1$ is infinite.
However, we do not need to uniformly provide this extension. The property (P) 
simply states the \emph{existence} of such an extension.

Case 2: The answer is positive. Given a string~$\sigma \in 2^{<\omega}$,
let~$S_\sigma = \{\tau \succeq \sigma \}$. 
Since the $\Phi$'s are tree functionals and $\Phi^{E_i \cup G}_{e_i}(2)$
outputs (among others) the strings~$\sigma_i$ and~$\tau_i$, whenever $\Phi^{E_i \cup G}_{e_i}(q)$
halts, it outputs a finite set $D$ of size $2^q$ intersecting both~$S_{\sigma_i}$ and~$S_{\tau_i}$.
Therefore, by compactness, there are finite sets $U_0 \subseteq S_{\sigma_0}$,
$V_0 \subseteq S_{\tau_0}$,  $U_1 \subseteq S_{\sigma_1}$ and~$V_1 \subseteq S_{\tau_1}$
such that for every $2$-partition~$Z_0 \cup Z_1 = X$, there is some side~$i < 2$
and some set~$G \subseteq Z_i$ such that $\Phi^{E_i \cup G}_{e_i}(q)$ intersects both $U_i$ and~$V_i$.
In particular, whenever $E_0 = F \cap G_0$, $E_1 = F \cap B_1$, $Z_0 = X \cap B_0$ and~$Z_1 = X \cap B_1$,
there is some $i < 2$ and some finite set~$G \subseteq X \cap B_i$ such that 
$\Phi^{(F \cap B_i) \cup G}_{e_i}(q)$ intersects both $U_i$ and~$V_i$.
Notice that all the strings in $U_i$ and $V_i$ have length at least~$q$
and therefore are not yet colored by the~$A$'s.
Put the~$U$'s in~$A_{0,s+1}$ and the~$V$'s in~$A_{1,s+1}$ and complete the coloring
so that~$A_{0,s+1} \cup A_{1, s+1} = 2^{<r}$ for some~$r \geq q$. Then go to the next step.
We claim that the property (P) holds for~$c$.
Indeed, let $G \subseteq X \cap B_i$ be the finite set witnessing 
$\Phi^{(F \cap B_i) \cup G}_{e_i}(q) \cap U_i \neq \emptyset$ and $\Phi^{(F \cap B_i) \cup G}_{e_i}(q) \cap V_i \neq \emptyset$.
The condition~$d = (F \cup G, X \setminus [0, max(E)])$ is an extension forcing~$\Qcal_{e_0,e_1}$
by its $i$th side.
This finishes the construction of the~$\tto^1_2$-instance and the proof of Theorem~\ref{thm:rt12-comp-not-reduc}.

\section{The fairness property}

In this section, we analyse the one-step separation proof
of section~\ref{sect:partitions-reducibility} in order to extract the core of the argument.
Then, we use the framework of Lerman, Solomon and Towsner
to design the computability-theoretic property
which will enable us to discriminate~$\rt^2_2$ from~$\tto^2_2$.

\smallskip
\emph{The multiple-step case}.
We have seen how to diagonalize against one application of~$\rt^1_2$.
The strength of~$\tto^1_2$ comes from the fact that 
when we build a solution $S$ to some $\tto^1_2$-instance $A_0 \cup A_1 = 2^{<\omega}$,
we must provide finite subtrees $S_n \cong 2^{<n}$ for arbitrarily large~$n$.
However, as soon as we have outputted $S_n$, we \emph{commit}
to provide arbitrarily large extensions to each leaf of~$S_n$. Since the
leaves in $S_n$ are pairwise incomparable, the sets of their extensions
are mutually disjoint. During the construction of the $\tto^1_2$-instance,
we can pick any pair $\sigma, \tau$ of incomparable leaves
in~$S_n$, and put the extensions of~$\sigma$ in $A_0$ and the extensions of~$\tau$ in~$A_1$
since they are disjoint.

In the proof of $\tto^1_2 \not \leq_{sc} \rt^1_2$,
when we create a solution to some~$\rt^1_2$-instance~$B_0 \cup B_1 = \omega$,
we build two candidate solutions~$G \cap B_0$ and~$G \cap B_1$ at the same time.
For each pair of tree functionals $\Phi_{e_0}$ and~$\Phi_{e_1}$, 
we must prevent one of $\Phi^{G \cap B_0}_{e_0}$ and~$\Phi^{G \cap B_1}_{e_1}$
from being a $\tto^1_2$-solution to the~$A$'s. However, the finite
subtrees $S_0$ and~$S_1$ outputted respectively by the left side and the right side
may have comparable leaves. We cannot take any 2 leaves of~$S_0$ and 2 leaves of~$S_1$
to obtain 4 pairwise incomparable strings. Thankfully, if $S_0$ and $S_1$ contain enough leaves (4 is enough),
we can find such strings.

If we try to diagonalize against two applications of~$\rt^1_2$,
below each side $G \cap B_0$ and~$G \cap B_1$ of the first $\rt^1_2$-instance,
we will have again two sides corresponding to the second $\rt^1_2$-instance.
We will have then to diagonalize against four candidate subtrees $S_0$, $S_1$, $S_2$ and~$S_3$.
We need therefore to wait until the subtrees have enough leaves, so that we can find 8 pairwise
incomparable leaves $\sigma_0, \tau_0, \dots, \sigma_3, \tau_3$
such that~$\sigma_i, \tau_i \in S_i$ for each~$i < 4$.

In the general case, we will then have to diagonalize against an arbitrarily large number of subtrees,
and want to ensure that if they contain enough leaves,
we can find two leaves in each, such that they form a set of pairwise incomparable strings.
This leads to the notion of disjoint matrix.

\begin{definition}[Disjoint matrix]
An $m$-by-$n$ \emph{matrix} $M$ is a rectangular array of strings $\sigma_{i,j} \in 2^{<\omega}$
such that $i < m$ and $j < n$. The $i$th \emph{row} $M(i)$ of the matrix $M$
is the $n$-tuple of strings $\sigma_{i,0}, \dots, \sigma_{i,n-1}$.
An $m$-by-$n$ matrix $M$ is \emph{disjoint} if for each row $i < m$,
the strings $\sigma_{i,0}, \dots, \sigma_{i,n-1}$ are pairwise incomparable.
\end{definition}

The following combinatorial lemma gives an explicit bound on the number of leaves 
we require on each subtree to obtain our desired sequence of
pairwise incomparable strings.

\begin{lemma}\label{lem:matrix-combi}
For every $m$-by-$2m$ disjoint matrix $M$, there are
pairwise incomparable strings $\sigma_0, \tau_0, \dots, \allowbreak \sigma_{m-1}, \tau_{m-1}$
such that $\sigma_i, \tau_i \in M(i)$ for every $i < m$.
\end{lemma}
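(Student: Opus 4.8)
The statement is a pigeonhole-flavored combinatorial fact about disjoint matrices, and I would prove it by induction on $m$, processing rows one at a time while maintaining enough "room" in the binary tree to keep going. The quantity that matters in each row $M(i)$ is not the strings themselves but the clopen sets $\cyl{\sigma_{i,j}}$ they determine; incomparability of a selection $\sigma_0,\tau_0,\dots$ is exactly disjointness of the corresponding clopen sets, so the problem becomes: given $m$ families, the $i$th consisting of $2m$ pairwise disjoint clopen sets, pick two disjoint clopen sets from each family so that all $2m$ chosen sets are pairwise disjoint.

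First I would set up the induction with a strengthened hypothesis. The natural invariant is: after handling rows $0,\dots,i-1$ and choosing $\sigma_0,\tau_0,\dots,\sigma_{i-1},\tau_{i-1}$, there remains a string $\rho_i$ (a "free cone" $\cyl{\rho_i}$) disjoint from all $2i$ chosen strings, and moreover every string $\tau\in M(i)$ that is \emph{inside} $\cyl{\rho_i}$ is incomparable with all previous selections. The point is that the set $M(i)$ has $2m$ pairwise incomparable strings; each previously chosen string $\sigma_k$ or $\tau_k$ can "block" (be comparable with) at most one of them, because the strings in a single row are pairwise incomparable, so a given string $\mu$ is comparable with at most one $\sigma_{i,j}$ — actually $\mu$ can be a prefix of several $\sigma_{i,j}$, so the blocking count needs care. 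The clean way: work inside a shrinking cone. Pick $\rho_0=\varepsilon$. At step $i$, among the $2m$ strings of $M(i)$, the ones not inside $\cyl{\rho_i}$ are comparable with $\rho_i$, but since they are pairwise incomparable at most... hmm, all of $M(i)$ could be prefixes of $\rho_i$. So "inside the cone" is the wrong direction.

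Let me reverse it: maintain that $\rho_i$ is a string \emph{extending} (below) a region still available, and at each step pick the two chosen strings $\sigma_i,\tau_i$ from $M(i)$ to be ones that lie \emph{below} $\rho_i$, i.e. $\sigma_i,\tau_i\succeq\rho_i$; then automatically they are incomparable with everything chosen outside $\cyl{\rho_i}$, and I only need them incomparable with each other, which holds since $M(i)$ is disjoint. The counting then is: how many strings of $M(i)$ lie in $\cyl{\rho_i}$? Previous steps removed at most $2i$ cones' worth; here is the real combinatorial content, and it is where I expect the factor $2m$ to come from — I would argue that passing from $\rho_{i}$ to $\rho_{i+1}$ (a proper extension chosen to avoid $\sigma_i$ and $\tau_i$) costs at most $2$ strings of the next row, so after $m$ steps at most $2m$ strings of any row have been lost, leaving (for $M(m-1)$, which has $2m$ strings) at least... this is tight only if we are careful. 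The cleanest bookkeeping: each chosen string $\sigma_k,\tau_k$ ($k<i$) can be comparable with at most one string of $M(i)$ \emph{from below} (since the $M(i)$ strings are pairwise incomparable, at most one of them extends $\sigma_k$) but possibly many from above; to kill the "from above" problem, always choose $\rho_{i+1}$ long enough to extend past the lengths of $\sigma_i,\tau_i$ — then no future row-string can be a proper prefix of $\sigma_i$ or $\tau_i$ within the relevant cone. With that length discipline, each of the $2i$ prior choices blocks at most one string of $M(i)$, so at least $2m-2i\ge 2$ strings of $M(i)$ survive for $i\le m-1$, and I can pick $\sigma_i,\tau_i$ among them and then a suitable $\rho_{i+1}$; but I actually need the survivors to lie below $\rho_i$, not merely to be unblocked, so I would combine the two constraints by choosing $\rho_i$ at each step to be a string below which at least $2m-2i$ strings of \emph{all} remaining rows sit, using a pigeonhole on the two immediate children of the current stem.

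**Main obstacle.** The delicate point is the simultaneous bookkeeping: I must guarantee both that the two strings picked in row $i$ are incomparable with \emph{all} earlier picks (handled by the cone discipline $\sigma_i,\tau_i\succeq\rho_i\succeq\cdots$) \emph{and} that enough strings remain available in \emph{every later} row, not just the current one. A sloppy induction that only controls the current row will not close. I expect the right formulation to strengthen the hypothesis to: "for a given string $\rho$ and a given disjoint $m$-by-$2m$ matrix, if every row has at least $2m$ strings extending $\rho$, the conclusion holds," and then at each step replace $\rho$ by one of its two children chosen so that, by pigeonhole, every row still has at least half as many extensions — but $2m$ halving $m$ times reaches $1$, not $2$, so the bound $2m$ must instead be consumed additively by explicit blocking as above rather than multiplicatively by halving. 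Reconciling "additive blocking gives the right bound $2m$" with "I need the survivors to sit below a common short stem" is the crux; I would resolve it by choosing $\rho_i$ to be a \emph{common} prefix of the two row-$i$ strings I pick (which exists trivially — take $\rho_i=\varepsilon$ and just pick any two unblocked strings, since unblocked already means incomparable with all earlier picks, the cone is a red herring once the blocking count is right). That collapses the argument: at step $i<m$, at most $2i\le 2m-2$ of the $2m$ strings of $M(i)$ are comparable with a previously chosen string, so at least two remain; pick them as $\sigma_i,\tau_i$; they are incomparable with all earlier picks by choice and with each other by disjointness of $M(i)$. Induction complete, and the bound $2m$ is exactly what makes "$2i\le 2m-2<2m$" leave room for two picks at the last step $i=m-1$.
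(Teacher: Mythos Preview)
Your final argument has a genuine gap: the claim ``at step $i<m$, at most $2i$ of the $2m$ strings of $M(i)$ are comparable with a previously chosen string'' is false, because you have the direction of the one-blocking fact backwards. It is true that at most one string of $M(i)$ can be a \emph{prefix} of a given $\mu$ (pairwise incomparability in a row gives that), but a single $\mu$ can be a prefix of \emph{many} strings of $M(i)$. You noticed exactly this earlier (``actually $\mu$ can be a prefix of several $\sigma_{i,j}$''), but your final collapse reintroduces the error. Concretely, take $m=2$ with $M(0)=\{0,10,110,111\}$ and $M(1)=\{00,01,100,101\}$; if at step $0$ you pick $\sigma_0=0,\tau_0=10$ (two perfectly ``unblocked'' strings), then all four strings of $M(1)$ are comparable with one of them, and step $1$ fails. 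Your row-by-row induction gives no mechanism for choosing $\sigma_0,\tau_0$ so as to avoid this.

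The paper's proof resolves the issue by a global greedy rule: at each step pick a string of \emph{maximal length} among all still-valid strings in all rows, and then flag every prefix of it as invalid. Because picks are made in nonincreasing length order, a newly chosen $\rho$ can only be comparable with a prior pick $\rho'$ by being a prefix of $\rho'$; but that is exactly what flagging forbids. And since flagging invalidates only prefixes of $\rho'$, each row (whose strings are pairwise incomparable) loses at most one string per step, so after at most $2m$ steps no row runs out. Your ``length discipline'' remark was groping toward this idea --- the fix is to control the length of the \emph{picked strings themselves}, not of an auxiliary cone $\rho_i$ --- but the induction as you finally state it does not implement it.
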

\begin{proof}
Consider the following greedy algorithm.
At each stage, we maintain a set~$P$ of \emph{pending rows} which is initially the whole matrix~$M$.
Among those rows, some strings are flagged as \emph{invalid}. Initially, all the strings are valid.
Pick a string $\rho$ of maximal length among all the valid strings of all the pending rows.
Let $M(i)$ be a pending row to which $\rho$ belongs.
If we have already chosen the value of $\sigma_i$, set $\tau_i = \rho$ and remove 
$M(i)$ from the pending rows. Otherwise, set $\sigma_i = \rho$.
In any case, flag every prefix of $\rho$ from any row of $M$ as invalid, and go to the next step.

Notice that at any step, we flag as invalid at most one string from each row of $M$
since the strings in each row are pairwise incomparable.
Moreover, since we want to construct a sequence of $2m$
pairwise incomparable strings, and at each step we add one string to this sequence,
there are at most $2m$ steps. The algorithm gets stuck at some points only if all pending rows contain only invalid strings,
which cannot happen since each row contains at least $2m$ strings.

We claim that the chosen strings are pairwise incomparable. Indeed, when at some stage, we pick
a string $\rho$, it is of shorter length than any string we have picked so far,
and cannot be a prefix of any of them since each time we pick a string, we flag all its prefixes as invalid in the matrix.
\end{proof}

\emph{Abstracting the requirements}.
The first feature of the framework of Lerman, Solomon and Towsner 
that we already exploited is the ``fairness'' of the $\tto^1_2$-instance
which allows each~$\rt^1_2$-instance to diagonalize it
as soon as the $\rt^1_2$-instance gives him enough occasions to do it.
We will now use the second aspect of this framework which consists in
getting rid of the complexity of the requirements by replacing them
with arbitrary computable predicates (or blackboxes).

Indeed, consider the case of two successive applications of~$\rt^1_2$.
Say that the first instance is $B_0 \cup B_1 = \omega$, and the second $C_0 \cup C_1 = \omega$.
We need to design the $\tto^1_2$-instance $A_0 \cup A_1 = 2^{<\omega}$ so that
there is an infinite set $G \cap B_i$ for some~$i < 2$ and an infinite set~$H \cap B_j$
for some~$j < 2$ such that $(G \cap B_i) \oplus (H \cap B_j)$ does not compute a
solution to the~$A$'s.
While constructing the $A$'s, we enumerate two levels of conditions.
We first enumerate the conditions~$c = (F, X)$ used for constructing the set~$G$,
but we also enumerate the conditions $c_0 = (F_0, X_0)$ and~$c_1 = (F_1, X_1)$
such that~$c_i$ is used to construct a solution~$H$ to the second $\rt^1_2$-instance~$C_0 \cup C_1 = \omega$
below~$G \cap B_i$. The question that the $\tto^1_2$-instance asks during its construction becomes

\smallskip
``For every 2-partition~$Z_0 \cup Z_1 = X$, is there some side~$i < 2$
and some set~$G \subseteq Z_i$ such that for every 2-partition~$W_0 \cup W_1 = X_i$,
there is some side~$j < 2$ and some set~$H \subseteq W_j$ 
such that $\Phi_{e_{i,j}}^{((F \cap B_i) \cup G) \oplus ((F_i \cap C_j) \cup H)}(q)$ halts?''
\smallskip

While staying~$\Sigma^0_1$ (with parameters), the question becomes
arbitrarily complicated to formulate. Moreover, looking at the shape of the question,
we see that the first iteration can box any $\Sigma^0_1$ question asked about the second iteration.
We can therefore abstract the question and make the fairness property independent
of the specificities of the forcing notion used to solve the~$\rt^1_2$-instances.
See~\cite{Patey2015Iterative} for detailed explanations about this abstraction process.

\begin{definition}[Formula, valuation]
An \emph{$m$-by-$n$ formula} is a formula $\varphi$
with distinguished set variables $U_{i,j}$ for each $i < m$ and $j < n$.
Given an $m$-by-$n$ matrix $M = \{\sigma_{i,j} : i < m, j < n\}$, 
an \emph{$M$-valuation} $V$ is a tuple of finite sets 
$A_{i,j} \subseteq \{\tau \in 2^{<\omega} : \tau \succeq \sigma_{i,j}\}$ 
for each $i < m$ and~$j < n$.
The valuation $V$ \emph{satisfies} $\varphi$ if $\varphi(A_{i,j} : i < m, j < n)$ holds.
We write $\varphi(V)$ for $\varphi(A_{i,j} : i < m, j < n)$.
\end{definition}

Given some valuation $V = (A_{i,j} : i < m, j < n)$ and some integer $s$, we write $V > s$
to say that for every $\tau \in A_{i,j}$, $|\tau| > s$. Moreover,
we denote by $V(i)$ the $n$-tuple $A_{i,0}, \dots, A_{i,n-1}$.
Following the terminology of~\cite{Lerman2013Separating}, we define 
the notion of essentiality for a formula (an abstract requirement),
which corresponds to the idea that there is room for diagonalization
since the formula is satisfied for arbitrarily far valuations.

\begin{definition}[Essential formula]
An $m$-by-$n$ formula $\varphi$ is \emph{essential} in an $m$-by-$n$ matrix $M$
if for every $s \in \omega$, there is an $M$-valuation $V > s$ such that $\varphi(V)$ holds.
\end{definition}

The notion of fairness is defined accordingly. If some formula
is essential, that is, gives enough room for diagonalization, then there is
an actual valuation which will diagonalize against the~$\tto^1_2$-instance.

\begin{definition}[Fairness]
Fix two sets $A_0, A_1 \subseteq 2^{<\omega}$.
Given an $m$-by-$n$ disjoint matrix $M$, an $M$-valuation~$V$ \emph{diagonalizes} against $A_0, A_1 \subseteq 2^{<\omega}$
if for every $i < m$, there is some~$L, R \in V(i)$ such that $L \subseteq A_0$ and $R \subseteq A_1$.
A set~$X$ is \emph{$n$-fair} for~$A_0, A_1$ if for every $m$ and every $\Sigma^{0,X}_1$ $m$-by-$2^nm$ formula
$\varphi$ essential in some disjoint matrix $M$, there is an $M$-valuation $V$ diagonalizing against $A_0, A_1$ 
such that $\varphi(V)$ holds.
A set $X$ is \emph{fair} for $A_0, A_1$ if it is $n$-fair for~$A_0, A_1$ for some~$n \geq 1$.
\end{definition}

Of course, if $Y \leq_T X$, then every $\Sigma^{0,Y}_1$ formula is $\Sigma^{0,X}_1$.
As an immediate consequence, if $X$ is $n$-fair for some $A_0, A_1$ and $Y \leq_T X$, then $Y$ is $n$-fair for $A_0, A_1$.
Moreover, if $X$ is $n$-fair for $A_0, A_1$ and $p > n$, $X$ is also $p$-fair for $A_0, A_1$ as witnessed
by cropping the rows.

\begin{definition}[Fairness preservation]
Fix a $\Pi^1_2$ statement $\Psf$.
\begin{itemize}
	\item[1.] $\Psf$ admits \emph{fairness (resp. $n$-fairness) preservation} if for all sets $A_0, A_1 \subseteq 2^{<\omega}$,
every set $C$ which is fair (resp. $n$-fair) for~$A_0, A_1$ and every $C$-computable $\Psf$-instance~$X$,
there is a solution $Y$ to $X$ such that $Y \oplus C$ is fair (resp. $n$-fair) for $A_0, A_1$.
	\item[2.] $\Psf$ admits strong \emph{fairness (resp. $n$-fairness) preservation} if for all sets $A_0, A_1 \subseteq 2^{<\omega}$,
every set $C$ which is fair (resp. $n$-fair) for~$A_0, A_1$ and every $\Psf$-instance~$X$,
there is a solution $Y$ to $X$ such that $Y \oplus C$ is fair (resp. $n$-fair) for $A_0, A_1$.
\end{itemize}
\end{definition}

Note that a principle $\Psf$ may admit fairness preservation without preserving $n$-fairness for any fixed~$n$,
as this is the case with~$\rt^2_2$ (see Theorem~\ref{thm:rt22-fairness-preservation}
and Theorem~\ref{thm:rt22-not-1-fairness-preservation}). 
On the other hand, if $\Psf$ admits $n$-fairness preservation for every $n$,
then it admits fairness preservation.
The notion of fairness preservation has been designed so that it is closed under the implication
over~$\rca$.

\begin{lemma}\label{lem:fairness-provability}
If $\Psf$ admits fairness preservation but not $\Qsf$, then $\Psf$ does not imply $\Qsf$
over $\rca$.
\end{lemma}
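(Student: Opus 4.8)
The plan is to argue contrapositively in the style of the standard ``$\omega$-model separation'' recipe sketched in Subsection~\ref{subsect:separating-rm}: build a Turing ideal $\Ical$ satisfying $\rca$ and $\Psf$ but not $\Qsf$, then invoke Friedman's characterization of $\omega$-models. Since $\Qsf$ fails to admit fairness preservation, there are sets $A_0, A_1 \subseteq 2^{<\omega}$, a set $C$ which is fair for $A_0, A_1$, and a $C$-computable $\Qsf$-instance $X$, such that \emph{every} solution $Y$ to $X$ has the property that $Y \oplus C$ is not fair for $A_0, A_1$. We take this fixed $X$ as the ``bad'' $\Qsf$-instance $I$ that the ideal must fail to solve. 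The key invariant propagated through the construction will be: \emph{every set in $\Ical_n$ is fair for $A_0, A_1$}. The starting ideal is $\Ical_0 = \{Z : Z \leq_T C\}$, which satisfies the invariant because fairness is downward closed under Turing reducibility (as noted right after the definition of fairness), and it contains no solution to $X$ precisely because any such solution $Y$ would be $\leq_T C$, hence fair, contradicting the choice of $C, X$.

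Next I would carry out the inductive step. Suppose $\Ical_n$ is a countable Turing ideal all of whose members are fair for $A_0, A_1$ and which contains no solution to $X$. Enumerate the $\Psf$-instances in $\Ical_n$; pick one, say $J$, with no solution already in $\Ical_n$ (if there is none, nothing to do). Since $J \in \Ical_n$ it is $D$-computable for some $D \in \Ical_n$, and $D$ is fair for $A_0, A_1$. By fairness preservation of $\Psf$, there is a solution $Y$ to $J$ such that $Y \oplus D$ is fair for $A_0, A_1$. Set $\Ical_{n+1}$ to be the Turing ideal generated by $\Ical_n \cup \{Y\}$; every element of it is Turing-reducible to a finite join of elements of $\Ical_n$ together with $Y$, hence Turing-reducible to a set of the form $Y \oplus D'$ with $D' \in \Ical_n$, and $Y \oplus D' \leq_T (Y \oplus D) \oplus D'$, which is fair (join of fair sets with a member of $\Ical_n$: fairness is preserved under joining with a fair set, by the same downward-closure remark applied after forming the join, since both $Y \oplus D$ and $D'$ live in an ideal all of whose members are fair — more carefully, $Y \oplus D' \leq_T (Y\oplus D)\oplus D'$ and one checks $(Y \oplus D) \oplus D'$ is fair because it is a join of two fair sets and fairness is closed under join within the ideal being built). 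Thus every member of $\Ical_{n+1}$ is fair. In particular no member of $\Ical_{n+1}$ is a solution to $X$: such a solution $Z$ would be $\leq_T Y \oplus D'$, hence $Z \oplus C \leq_T Y \oplus D' \oplus C$, which is fair, contradicting that no solution $Z$ to $X$ has $Z \oplus C$ fair. (Here one uses $C \in \Ical_0 \subseteq \Ical_{n+1}$, so $Y \oplus D' \oplus C$ is a member of the ideal and hence fair.)

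Interleaving over all instances in a bookkeeping fashion, I would then set $\Ical = \bigcup_n \Ical_n$. By construction $\Ical$ is a Turing ideal, so $(\omega, \Ical)$ is an $\omega$-model of $\rca$; every $\Psf$-instance in $\Ical$ already appears in some $\Ical_n$ and gets a solution added at a later stage, so $\Ical \models \Psf$; and $X \in \Ical$ is a $\Qsf$-instance with no solution in $\Ical$, so $\Ical \not\models \Qsf$. Hence $\Psf$ does not imply $\Qsf$ over $\rca$.

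The main obstacle is bookkeeping of a purely formal kind: making sure fairness is genuinely closed under Turing join within the constructed ideal (so that the invariant survives the join-closure taken at each stage), and organizing the enumeration of $\Psf$-instances so that every instance appearing at some finite stage is eventually handled — this requires a dovetailing over pairs $(n, \text{instance index})$. The substantive computability-theoretic content is entirely packaged into the hypothesis ``$\Psf$ admits fairness preservation''; once that is granted, the separation is the routine iterated-forcing/ideal construction, and the only genuinely load-bearing closure property used is that fairness descends under $\leq_T$ and is stable under joining with fair sets, which is exactly the remark recorded immediately after the definition of fairness.
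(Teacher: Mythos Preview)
Your overall strategy matches the paper's, but there is a genuine gap in the inductive step. You assert that $(Y \oplus D) \oplus D'$ is fair ``because it is a join of two fair sets and fairness is closed under join within the ideal being built''. This closure under join is nowhere established, and it is \emph{not} a consequence of the downward-closure remark following the definition of fairness: that remark only says that if $Z \leq_T X$ and $X$ is fair then $Z$ is fair. A $\Sigma^{0, X \oplus Y}_1$ formula need not be $\Sigma^{0,X}_1$ or $\Sigma^{0,Y}_1$, so knowing that $X$ and $Y$ are each fair gives you no handle on formulas with oracle $X \oplus Y$. Consequently, once you only know that $Y \oplus D$ is fair for the particular $D \in \Ical_n$ computing the chosen instance $J$, you have no way to conclude that $Y \oplus D'$ is fair for an arbitrary $D' \in \Ical_n$, and your invariant ``every member of $\Ical_n$ is fair'' does not propagate to $\Ical_{n+1}$.

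The fix --- which is precisely what the paper does --- is to avoid arbitrary ideals and instead maintain a single top set at each stage. Set $X_0 = C$; at stage $s$, let $C_s = X_0 \oplus \cdots \oplus X_s$, assume inductively that $C_s$ is fair, and apply fairness preservation of $\Psf$ to the $C_s$-computable $\Psf$-instance $I_s^{C_s}$ to obtain a solution $X_{s+1}$ with $C_{s+1} = C_s \oplus X_{s+1}$ fair. Then take $\Ical = \{ Z : (\exists s)\, Z \leq_T C_s \}$. Every $Z \in \Ical$ is fair by downward closure (now legitimately applied, since $Z \leq_T C_s$ for a single fair set $C_s$), and in particular $Z \oplus C \leq_T C_s$ is fair, so $Z$ cannot solve the bad $\Qsf$-instance. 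Your proposal becomes correct with this organization; as written, the join-closure step is unjustified.
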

\begin{proof}
Since $\Qsf$ does not admit fairness preservation, 
there is a set $C$ which is $n$-fair for some $A_0, A_1 \subseteq 2^{<\omega}$ and 
a $C$-computable $\Qsf$-instance~$J$
such that for every solution $Y$ to $J$, $C \oplus Y$ is not fair for $A_0, A_1$.
We build an infinite sequence of sets $X_0, X_1, \dots$ starting with $X_0 = C$
and such that for every $s \in \omega$,
\begin{itemize}
	\item[(i)] $X_{s+1}$ is a solution to the $\Psf$-instance $I^{X_0 \oplus \dots \oplus X_s}_s$
	\item[(ii)] $X_0 \oplus \dots \oplus X_{s+1}$ is fair for $A_0, A_1$
\end{itemize}
where $I_0, I_1, \dots$ is a (non-computable) enumeration of all $\Psf$-instance functionals.
Let $\Mcal$ be the $\omega$-model whose second-order part is the Turing ideal
\[
  \Ical = \{ Z : (\exists s)[Z \leq_T X_0 \oplus \dots \oplus X_s] \}
\]
By Friedman~\cite{Friedman1974Some}, $\Mcal \models \rca$ since $\Ical$ is a Turing ideal.
By (i), $\Mcal \models \Psf$. As $J \leq_T C = X_0$, $J \in \Ical$.
However, for every $Z \in \Ical$, $Z \oplus C$ is fair for $A_0, A_1$ by downward closure
of fairness under the Turing reducibility, so $Z$ is not a solution to the $\Qsf$-instance $J$.
Therefore $\Mcal \not \models \Qsf$.
\end{proof}

Now we have introduced the necessary terminology, we create a $\Delta^0_2$
instance of~$\tto^1_2$ which will serve as a bootstrap for fairness preservation.

\begin{lemma}\label{lem:partition-emptyset-fair}
There exists a $\Delta^0_2$ partition $A_0 \cup A_1 = 2^{<\omega}$ such that
$\emptyset$ is 1-fair for~$A_0, A_1$.
\end{lemma}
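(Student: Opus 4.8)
The plan is to build $A_0, A_1$ by a finite-extension construction relative to $\emptyset'$, handling one requirement at a time. We enumerate all pairs $(\varphi, M)$ where $\varphi$ is a $\Sigma^{0,\emptyset}_1$ (i.e.\ c.e., with no oracle) $m$-by-$2m$ formula and $M$ is an $m$-by-$2m$ disjoint matrix; since $1$-fairness only quantifies over $n=1$, the relevant matrices are exactly $m$-by-$2m$. At stage $s$ we have committed a finite portion $A_{0,s} \cup A_{1,s} = 2^{<q_s}$. Given the $s$th pair $(\varphi, M)$, we first ask the ($\emptyset'$-decidable, since $\varphi$ is c.e.\ and ``essentiality up to a given length'' is a $\Sigma^0_1$ question whose negation we can certify with a finite search bounded using compactness) question: is $\varphi$ essential in $M$, i.e.\ does there exist an $M$-valuation $V > q_s$ with $\varphi(V)$? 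If not, there is nothing to do: the fairness clause for this pair is vacuously satisfied, so we go to stage $s+1$ without changing the $A$'s. If yes, we fix such a valuation $V = (A_{i,j} : i < m, j < 2m)$ with all its strings of length $> q_s$ (in particular, none of them has been colored yet). Now we must diagonalize: for each row $i < m$ we need to pick two cells $L_i, R_i \in V(i)$ and promise $L_i \subseteq A_0$, $R_i \subseteq A_1$. The strings inside one valuation cell $A_{i,j}$ all extend the single node $\sigma_{i,j}$, and within a row the $\sigma_{i,j}$ ($j < 2m$) are pairwise incomparable; but cells from different rows may contain comparable strings, which is the obstruction to naively assigning colors.

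This is where Lemma~\ref{lem:matrix-combi} does the work. Apply it to the $m$-by-$2m$ disjoint matrix whose $(i,j)$ entry is $\sigma_{i,j}$: it produces pairwise incomparable nodes $\rho_0, \eta_0, \dots, \rho_{m-1}, \eta_{m-1}$ with $\rho_i, \eta_i \in M(i)$ for each $i < m$. Say $\rho_i = \sigma_{i, a_i}$ and $\eta_i = \sigma_{i, b_i}$. Then the cells $L_i := A_{i, a_i}$ and $R_i := A_{i, b_i}$ have the property that $\bigcup_{i<m}(L_i \cup R_i)$ is a set of strings each of which extends one of the $\rho_i$ or $\eta_i$; since those $2m$ nodes are pairwise incomparable, the cones above them are pairwise disjoint, hence no two strings in $\bigcup_{i<m}(L_i \cup R_i)$ are forced into conflicting colors. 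So we may put every string extending some $\rho_i$ into $A_{0,s+1}$ and every string extending some $\eta_i$ into $A_{1,s+1}$ — more precisely, color the (finitely many) strings in $\bigcup_i L_i$ with $0$ and those in $\bigcup_i R_i$ with $1$, which is consistent because these finite sets are disjoint — and then extend to a total coloring of $2^{<r_s}$ for some $r_s > q_s$ large enough to include all of them, setting $A_{0,s+1} \cup A_{1,s+1} = 2^{<r_s}$. Go to stage $s+1$.

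The construction is $\emptyset'$-computable: the only non-effective ingredient is deciding essentiality, which is $\Sigma^0_1$ (so $\emptyset'$-decidable), and Lemma~\ref{lem:matrix-combi}'s greedy algorithm is fully effective given $M$ and $V$. Hence $A_0 = \bigcup_s A_{0,s}$ and $A_1 = \bigcup_s A_{1,s}$ form a $\Delta^0_2$ partition of $2^{<\omega}$ (every node of length $< q_s$ is decided by stage $s$, and the $q_s$ are nondecreasing and unbounded). It remains to check $1$-fairness of $\emptyset$ for $A_0, A_1$: fix $m$ and a $\Sigma^{0,\emptyset}_1$ $m$-by-$2m$ formula $\varphi$ essential in some disjoint matrix $M$. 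The pair $(\varphi, M)$ is handled at some stage $s$; since $\varphi$ is essential in $M$ — and essentiality at level $q_s$ follows from essentiality — the ``yes'' branch is taken, and the chosen valuation $V$ together with the cells $L_i, R_i$ witnesses that $V$ diagonalizes against $A_0, A_1$ (each $L_i \subseteq A_{0,s+1} \subseteq A_0$, each $R_i \subseteq A_1$) while $\varphi(V)$ holds by choice of $V$. That is exactly the definition of $\emptyset$ being $1$-fair for $A_0, A_1$.

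\textbf{Main obstacle.} The one delicate point is the interaction between ``committing'' earlier and ``having room'' later: when at stage $s$ we ask whether $\varphi$ is essential in $M$, we need the witnessing valuation $V$ to live entirely above length $q_s$ so that it does not clash with colors already assigned — this is why essentiality is stated as holding \emph{for every} $s$, giving valuations $V > s$. A second subtlety is ensuring the essentiality question is genuinely decidable by $\emptyset'$: a positive instance is witnessed by exhibiting a single $V > q_s$ with $\varphi(V)$, which is $\Sigma^0_1$; one should note this uses only that $\varphi$ is c.e., with no real-oracle parameter, which is precisely the ``$\Sigma^{0,\emptyset}_1$'' hypothesis appropriate to the bootstrap case (the parametrized, $C$-relative version is what gets propagated in the later sections). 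Everything else is bookkeeping.
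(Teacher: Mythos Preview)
Your proposal is correct and follows essentially the same approach as the paper: a $\emptyset'$-computable finite-extension construction that, at stage $s$, asks the $\Sigma^0_1$ question whether some $M$-valuation $V>q_s$ satisfies $\varphi$, and in the positive case invokes Lemma~\ref{lem:matrix-combi} on $M$ to select pairwise incomparable roots so that the corresponding cells of $V$ can be consistently split between $A_0$ and $A_1$. The only cosmetic difference is that the paper explicitly extends the coloring by one level in the ``no'' branch to guarantee $q_s\to\infty$, whereas you leave the $A$'s unchanged there; your claim that the $q_s$ are unbounded then relies on the (true but unstated) fact that infinitely many enumerated pairs $(\varphi,M)$ are essential, so infinitely many stages take the ``yes'' branch.
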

\begin{proof}
The proof is done by a no-injury priority construction.
Let $\varphi_0, \varphi_1, \dots$ be a computable enumeration of all $m$-by-$2m$ $\Sigma^0_1$ formulas
and $M_0, M_1, \dots$ be an enumeration of all $m$-by-$2m$ disjoint matrices for every $m$.
We want to satisfy the following requirements for each pair of integers~$e,k$.

\begin{quote}
$\Rcal_{e,k}$: If $\varphi_e$ is essential in $M_k$, then $\varphi_e(V)$ holds
for some $M_k$-valuation $V$ diagonalizing against $A_0, A_1$.
\end{quote}

The requirements are ordered via the standard pairing function $\tuple{\cdot, \cdot}$.
The sets $A_0$ and $A_1$ are constructed by a $\emptyset'$-computable list of 
finite approximations $A_{i,0} \subseteq A_{i,1} \subseteq \dots$
such that all elements added to~$A_{i,s+1}$ from~$A_{i,s}$
are strictly greater than the maximum of~$A_{i,s}$ for each~$i < 2$. 
We then let $A_i = \bigcup_s A_{i,s}$ which will be a~$\Delta^0_2$ set.
At stage 0, set $A_{0,0} = A_{1,0} = \emptyset$. Suppose that at stage $s$,
we have defined two disjoint finite sets $A_{0,s}$ and $A_{1,s}$ such that
\begin{itemize}
	\item[(i)] $A_{0,s} \cup A_{1,s} = 2^{<b}$ for some integer $b \geq s$
	\item[(ii)] $\Rcal_{e',k'}$ is satisfied for every $\tuple{e',k'} < s$
\end{itemize}
Let $\Rcal_{e,k}$ be the requirement such that $\tuple{e,k} = s$.
Decide $\emptyset'$-computably whether there is some $M_k$-valuation $V > b$
such that $\varphi_e(V)$ holds. If so, computably fetch such a~$V$ 
and let $d$ be an upper bound on the length of the strings in $V$.
By Lemma~\ref{lem:matrix-combi}, there are
pairwise incomparable strings $\sigma_0, \tau_0, \dots, \allowbreak \sigma_{m-1}, \tau_{m-1}$
such that $\sigma_i, \tau_i \in M(i)$ for every $i < m$.
For each $i < m$, let $A_{i,l}$ and $A_{i,r}$ be the sets in $V$ corresponding to $\sigma_i$
and $\tau_i$, respectively.
Set $A_{0,s+1} = A_{0,s} \bigcup_{i < m} A_{i,l}$ and $A_{1,s+1} = 2^{<d} \setminus A_{0,s+1}$.
This way, $A_{0,s+1} \cup A_{1,s+1} = 2^{<d}$.
Since the $\sigma$'s and $\tau$'s are pairwise incomparable, the sets $A_{i,l}$ and $A_{i,r}$ are disjoint,
so $\bigcup_{i < m} A_{i,r} \subseteq 2^{<d} \setminus A_{0,s+1}$ and the requirement $\Rcal_{e,i}$ is satisfied.
If no such $M_k$-valuation is found, the requirement $\Rcal_{e,k}$ is vacuously satisfied.
Set $A_{0,s+1} = A_{0,s} \cup 2^b$ and $A_{1,s+1} = A_{1,s}$.
This way, $A_{0,s+1} \cup A_{1,s+1} = 2^{<(b+1)}$.
In any case, go to the next stage. This finishes the construction.
\end{proof}

\begin{theorem}\label{thm:tt22-not-fairness}
$\tto^2_2$ does not admit fairness preservation.
\end{theorem}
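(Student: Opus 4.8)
The plan is to exhibit a single $\tto^2_2$-instance together with a set that is fair for some partition, such that no solution restores fairness. The natural candidate is built from Lemma~\ref{lem:partition-emptyset-fair}: fix a $\Delta^0_2$ partition $A_0 \cup A_1 = 2^{<\omega}$ for which $\emptyset$ is $1$-fair. Since fairness is closed downward under Turing reducibility and $p$-fairness follows from $n$-fairness for $p > n$, $\emptyset$ is in fact $n$-fair for every $n \geq 1$, hence fair for $A_0, A_1$. Now I want to build a $\tto^2_2$-instance $f : [2^{<\omega}]^2 \to 2$ (which is $\emptyset$-computable, so certainly $C$-computable for $C = \emptyset$) such that for \emph{every} $f$-homogeneous tree $S$ — that is, every $S \cong 2^{<\omega}$ on which $f \uh [S]^2$ is constant — the set $S$ fails to be fair for $A_0, A_1$. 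Because fairness is downward closed, it suffices that $S$ itself is not fair, and in fact it suffices to defeat $n$-fairness of $S$ for each fixed $n$ simultaneously: exhibit, for each $n$, an $m$ and a $\Sigma^{0,S}_1$ $m$-by-$2^n m$ formula $\varphi$ essential in some disjoint matrix $M$, such that no $M$-valuation both satisfies $\varphi$ and diagonalizes against $A_0, A_1$.

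The coloring $f$ should be designed so that a homogeneous subtree $S$ is forced to be "thin" in a way that lets $S$ itself enumerate, uniformly, where $A_0$ and $A_1$ disagree along the branches of $S$. Concretely, I would color a pair $\sigma \prec \tau$ of comparable nodes according to some bit of information computed from the position of $\tau$ relative to the $\Delta^0_2$ approximation to the $A$'s — the point being that on the isomorphic copy $S$, deciding this color becomes $S$-computable rather than merely $\Delta^0_2$, so $S$ gains the power to locate the $A$-boundary along its own branches. One then takes the $\Sigma^{0,S}_1$ formula $\varphi$ that asserts, for $2^n m$ set-variables grouped into $m$ rows, that each row's variables can be filled by finite subsets of the corresponding cones that all land on the \emph{same} side of the $A$-partition (the side being readable $S$-computably); essentiality in a suitably chosen disjoint matrix $M$ built from the branches of $S$ holds because $S \cong 2^{<\omega}$ provides arbitrarily deep incomparable nodes, and along a fixed branch the $A$-colors eventually stabilize to one side. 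But such a valuation cannot diagonalize against $A_0, A_1$, since diagonalizing requires, in \emph{each} row, finding one set inside $A_0$ and one inside $A_1$, which directly contradicts "all on the same side." Hence $S$ is not $n$-fair for $A_0, A_1$ for any $n$, so not fair, which is what we need.

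The main obstacle is twofold. First, one must arrange the coloring $f$ so that homogeneity genuinely transfers the $\Delta^0_2$ information about the $A$-boundary into something $S$-computable, while keeping $f$ itself computable (or at worst of low complexity — $\Delta^0_2$ suffices since it only needs to be $C$-computable for the fixed fair $C$, and here $C = \emptyset$, so we actually need $f$ computable; if that is too tight, one shifts to $C = \emptyset'$ which is still fair by downward closure is not available, so care is needed — more robustly one takes $C$ to be any fixed set computing the $\Delta^0_2$ partition and notes $\emptyset$-fairness upgrades to $C$-fairness only via downward closure in the wrong direction, so in fact one should prove directly that the specific $\Delta^0_2$ $A$'s admit a fair set computing them, e.g. a low such set via a relativized version of Lemma~\ref{lem:partition-emptyset-fair}). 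Second, one must verify essentiality of $\varphi$ in $M$: this is where the "tree" structure does the work, as the isomorphism $g : 2^{<\omega} \to S$ guarantees that within any finite level of $S$ there are enough pairwise incomparable nodes to populate a disjoint $m$-by-$2^n m$ matrix, and along each such node the $A$-coloring of long enough extensions is constant, giving arbitrarily far valuations on a single side. Assembling these — the color design, the relativized bootstrap partition, and the essentiality check — is the substance of the proof; the diagonalization-impossibility for same-side valuations is then immediate from the definitions of \emph{diagonalizes} and \emph{fair}.
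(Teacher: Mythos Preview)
Your outline matches the paper's approach: start from the $\Delta^0_2$ partition $A_0\cup A_1=2^{<\omega}$ of Lemma~\ref{lem:partition-emptyset-fair}, build a computable $\tto^2_2$-instance from its limit approximation, and show every homogeneous $S$ fails $n$-fairness via a single $1$-by-$2^n$ formula and a matrix taken from the $n$th level of $S$. But two concrete pieces are missing, and without them the argument does not close.

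First, the coloring. You gesture at ``some bit of information computed from the position of $\tau$ relative to the $\Delta^0_2$ approximation,'' and then worry whether $f$ can be made computable. It can, directly: by the limit lemma take a computable $h:2^{<\omega}\times\omega\to 2$ with $\sigma\in A_{\lim_s h(\sigma,s)}$, and set $f(\sigma,\tau)=h(\sigma,|\tau|)$. The crucial consequence you do not state is that any $f$-homogeneous $S\cong 2^{<\omega}$ with color $c$ satisfies $S\subseteq A_c$: for $\sigma\in S$, pick $\tau\succ\sigma$ in $S$ with $|\tau|$ large, and $c=f(\sigma,\tau)=h(\sigma,|\tau|)\to\lim_s h(\sigma,s)$. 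All your computability worries evaporate once you have this.

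Second, the formula. Your proposed $\varphi$ asks that each variable be a finite set ``landing on the same side of the $A$-partition, the side being readable $S$-computably.'' You have not shown $A$-membership of arbitrary strings is $S$-decidable, and in general it is not; likewise your essentiality claim that ``along a fixed branch the $A$-colors eventually stabilize'' is not a property of the $A$'s. The fix is to use $S\subseteq A_c$: take $\varphi(U_j:j<2^n)$ to say simply that each $U_j$ is a nonempty subset of $S$. This is trivially $\Sigma^{0,S}_1$; essentiality in the matrix $M=(g(\tau_j):j<2^n)$ (with $\tau_j$ the $j$th string of length $n$) holds because $S$ has nodes of arbitrary depth above each $g(\tau_j)$; and no satisfying valuation can diagonalize since every $U_j\subseteq S\subseteq A_c$, so no $U_j\subseteq A_{1-c}$.
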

\begin{proof}
Let~$A_0 \cup A_1 = 2^{<\omega}$ be the $\Delta^0_2$ partition constructed in Lemma~\ref{lem:partition-emptyset-fair}.
By Schoenfield's limit lemma~\cite{Shoenfield1959degrees}, there is a computable function $h : 2^{<\omega} \times \omega \to 2$ such 
that for each~$\sigma \in 2^{<\omega}$, $\lim_s h(\sigma, s)$ exists and $\sigma \in A_{\lim_s h(\sigma, s)}$.
Let~$f : [2^{<\omega}]^2 \to 2$ be the computable coloring defined by $f(\sigma, \tau) = h(\sigma, |\tau|)$
for each~$\sigma \prec \tau \in 2^{<\omega}$. Let~$S \cong 2^{<\omega}$ be a $\tto^2_2$-solution to~$f$
with witness isomorphism~$g : 2^{<\omega} \to S$ and witness color~$c < 2$. Note that $S \subseteq A_c$.

Fix any~$n \geq 1$. 
We claim that $S$ is not $n$-fair for~$A_0, A_1$. For this, we construct a $1$-by-$2^n$ $\Sigma^{0,S}_1$ formula
and a $1$-by-$2^n$ disjoint matrix~$M$ such that~$\varphi$ is essential in~$M$,
but such that every $M$-valuation~$V$ satisfying $\varphi$ is included in~$A_c$.

Let~$\varphi(U_j : j < 2^n)$ be the $1$-by-$2^n$ $\Sigma^{0,S}_1$ formula
which holds if for each $j < 2^n$, $U_j$ is a non-empty subset of $S$.
Let $M = (\sigma_j : j < 2^n)$ be the $1$-by-$2^n$ disjoint matrix defined 
for each $j < 2^n$ by
$\sigma_j = g(\tau_j)$ where $\tau_j$ is the $j$th string of length~$n$.
In other words, $\sigma_j$ is the $j$th node at level~$n$ in~$S$.
For every $s$, let $V_s$ be the $M$-valuation defined by $B_j = \{g(\rho)\}$ such that $\rho$ is the least string of length $max(n,s)$ extending~$\tau_j$.  Notice that $V_s > s$ and $\varphi(V_s)$ holds.
Therefore, the formula $\varphi$ is essential in $M$. 
For every $M$-valuation $V = (B_j : j < 2^n)$ such that $\varphi(V)$ holds,
there is no $j < 2^n$ such that $B_j \subseteq A_{1-c}$. Indeed, since $\varphi(V)$ holds,
$B_j$ is a non-empty subset of~$S$, which is itself a subset of $A_c$.
Therefore $S$ is not $n$-fair for~$A_0, A_1$.
\end{proof}

Notice that we actually proved a stronger statement.
Dzhafarov, Hirst and Lakins defined in~\cite{Dzhafarov2010Ramseys} various notions
of stability for the tree theorem for pairs. A coloring~$f : [2^{<\omega}]^2 \to r$
is \emph{1-stable} if for every $\sigma \in 2^{<\omega}$, there is some threshold $t$
and some color~$c < r$ such that $f(\sigma, \tau) = c$ for every $\tau \succ \sigma$
such that $|\tau| \geq t$. In the proof of Theorem~\ref{thm:tt22-not-fairness}, we showed in fact that
$\tto^2_2$ restricted to 1-stable colorings does not admit fairness preservation.
In the same paper, Dzhafarov et al.\ studied an increasing polarized version of the tree theorem for pairs,
and proved that its 1-stable restriction coincides with the 1-stable tree theorem for pairs over~$\rca$.
Therefore the increasing polarized tree theorem for pairs does not admit fairness preservation.

\section{Separating principles in reverse mathematics}\label{sect:separating-principles}

In this section, we prove fairness preservation for various principles in reverse mathematics,
namely, weak K\"onig's lemma, cohesiveness and $\rt^2_2$. We prove independently that
they admit fairness preservation, and then use the compositional
nature of the notion of preservation to deduce that the conjunction of these
principles do not imply $\tto^2_2$ over $\rca$.

\begin{definition}[Weak König's lemma]
$\wkl$ is the statement ``Every infinite binary tree has an infinite path''.
\end{definition}

Weak König's lemma is one of the ``Big Five''. 
It can be thought of as capturing compactness arguments.
The question of its relation with $\rt^2_2$ has been a long standing open problem,
until Cholak, Jockusch and Slaman~\cite{Cholak2001strength} 
and Liu~\cite{Liu2012RT22} proved that
$\rt^2_2$ is incomparable with weak K\"onig's lemma.
Although the above mentioned results show that compactness is not really
necessary in the proof of~$\rt^2_2$, $\wkl$
preserves many computability-theoretic notions and is therefore
involved in many effective constructions related to~$\rt^2_2$.
Flood~\cite{Flood2012Reverse} introduced recently a Ramsey-type version of K\"onig's lemma ($\rwkl$).
This strict weakening of~$\wkl$ is sufficient in most applications of~$\wkl$
involved in proofs of~$\rt^2_2$. The statement~$\rwkl$ has been later studied by Bienvenu, Patey and Shafer~\cite{Bienvenu2015logical}
and by Flood and Towsner~\cite{Flood2014Separating}.

\begin{theorem}\label{thm:wkl-n-fairness}
For every $n \geq 1$, $\wkl$ admits $n$-fairness preservation.
\end{theorem}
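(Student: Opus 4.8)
The plan is to fix sets $A_0, A_1 \subseteq 2^{<\omega}$, a set $C$ which is $n$-fair for $A_0, A_1$, and a $C$-computable infinite binary tree $T$, and to produce an infinite path $P$ through $T$ such that $P \oplus C$ is $n$-fair for $A_0, A_1$. Since $n$-fairness is closed downward under Turing reducibility, it suffices to build $P$ together with the join; equivalently, relativising everything to $C$, it is enough to treat the case $C = \emptyset$ and produce a path $P$ through a computable infinite binary tree such that $P$ is $n$-fair, and then relativise. So the real task is: given a (computable in $C$) infinite binary tree $T$, force a path $P$ so that for every $m$ and every $\Sigma^{0,P\oplus C}_1$ $m$-by-$2^n m$ formula $\varphi$ that is essential in some disjoint matrix $M$, there is an $M$-valuation $V$ diagonalising against $A_0, A_1$ with $\varphi(V)$.

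The construction is a forcing argument whose conditions are (subtrees coded by) finite binary strings $\rho$ with $\rho \in T$, ordered by extension; the generic $P$ is the resulting path. The requirements are indexed by pairs $(e, k)$ asking: if the $e$th $\Sigma^{0,P\oplus C}_1$ formula $\varphi_e$ (of the appropriate dimensions) is essential in the matrix $M_k$, then $\varphi_e(V)$ holds for some $M_k$-valuation $V$ diagonalising against $A_0, A_1$. The key point is that, because $\varphi_e$ is $\Sigma^{0,P\oplus C}_1$, whether $\varphi_e(V)$ holds depends only on a finite initial segment of $P$; so for each fixed matrix $M$ and each valuation $V$, the set of conditions $\rho$ forcing $\varphi_e(V)$ to hold is $\Sigma^{0,C}_1$ (open), hence decidable by $C'$, and there is an obvious dense set of conditions deciding it. The forcing question ``does some extension $\rho' \succeq \rho$ in $T$ force $\varphi_e(V)$ to hold for some $V > s$?'' is itself $\Sigma^{0,C}_1$ (it only refers to the computable-in-$C$ tree $T$ and the $\Sigma_1$ formula), so it is $C'$-decidable, and the whole construction can be carried out below $C'$, giving a $P$ with $P \oplus C \leq_T C'$ or at least a $C'$-recursive-in-oracle construction — whatever low-complexity bound is convenient; what matters for the conclusion is only that $P$ is a genuine path and that every requirement is met. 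The essentiality hypothesis is used exactly as in Lemma~\ref{lem:partition-emptyset-fair}: if $\varphi_e$ is essential in $M_k$ then for every $s$ there is an $M_k$-valuation $V > s$ with $\varphi_e(V)$, and since $\varphi_e$ is $\Sigma_1$ relative to $P \oplus C$, some finite condition along $P$ already forces $\varphi_e(V)$; we then invoke the $n$-fairness of $C$ — applied to the $\Sigma^{0,C}_1$ formula obtained by projecting away the $P$-part via the compactness/forcing argument — to extract a valuation $V$ that additionally diagonalises against $A_0, A_1$. Here the exponent matching ($m$-by-$2^n m$ on the $\wkl$ side feeding into the $m$-by-$2^n m$ clause in the definition of $n$-fair for $C$) is what makes the bookkeeping close up: the path $P$ contributes no extra rows, only tightens the $\Sigma_1$ formula, so the same row-count suffices.

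Concretely, the construction proceeds in stages $s = \langle e, k \rangle$. At stage $s$ we have a condition $\rho_s \in T$. We ask the $C'$-decidable question whether there is a valuation $V > |\rho_s|$ and an extension $\rho' \succeq \rho_s$ in $T$ forcing $\varphi_e(V)$. If not, $\varphi_e$ cannot be essential along any extension of $\rho_s$ (since essentiality would supply arbitrarily far $V$, and $\Sigma_1$-ness would supply a forcing condition inside $T$ below some extension — here one uses that $T$ is infinite so extensions exist), and the requirement is vacuously met; set $\rho_{s+1} = \rho_s \concat 0$ if $\rho_s \concat 0 \in T$, else $\rho_s \concat 1$. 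If yes, we do not merely grab that $V$: instead we observe that the formula $\psi(U_{i,j}) \equiv (\exists \rho' \succeq \rho_s \text{ in } T)[\rho' \text{ forces } \varphi_e(U_{i,j})]$ is $\Sigma^{0,C}_1$ and essential in $M_k$ (because $\varphi_e$ is), so by the $n$-fairness of $C$ there is an $M_k$-valuation $V$ with $\psi(V)$ that diagonalises against $A_0, A_1$; fix such a $V$, fix the witnessing $\rho' \succeq \rho_s$ in $T$ forcing $\varphi_e(V)$, and set $\rho_{s+1} = \rho'$. The union $P = \bigcup_s \rho_s$ is an infinite path through $T$ (at stages of the first type we explicitly lengthen inside $T$; at stages of the second type we lengthen to $\rho'$), and every requirement $(e,k)$ is met, so $P$ — and hence $P \oplus C$, since $P \leq_T P \oplus C$ and the construction is uniform — is $n$-fair for $A_0, A_1$. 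Relativising the whole argument to $C$ (replacing ``computable'' by ``$C$-computable'' and $\Sigma_1$ by $\Sigma^{0,C}_1$ throughout) handles the general $C$-computable instance $T$.

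\textbf{Main obstacle.} The delicate step is the transfer from a $\Sigma^{0,P\oplus C}_1$ formula (which mixes the generic path and the oracle) to a $\Sigma^{0,C}_1$ formula to which the $n$-fairness of $C$ can be applied: one must argue, via compactness and the forcing relation, that the ``there is an extension in $T$ forcing $\varphi_e$'' projection is both genuinely $\Sigma^{0,C}_1$ and still essential in $M_k$, and that a $V$ satisfying this projection can be realised along $P$ by an actual finite extension inside $T$. Getting the essentiality to survive the projection — i.e.\ that arbitrarily far valuations for the original formula yield arbitrarily far valuations for the projected formula — is where the infinitude of $T$ and the $\Sigma_1$ (finite-use) nature of $\varphi_e$ are both essential, and is the point most likely to need care in the full write-up.
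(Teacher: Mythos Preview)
Your high-level strategy --- project the $\Sigma^{0,P\oplus C}_1$ formula $\varphi_e$ down to a $\Sigma^{0,C}_1$ formula $\psi$ by quantifying over extensions in $T$, then invoke the $n$-fairness of $C$ --- is exactly the paper's idea. However, the execution has two gaps, one minor and one substantive.

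The minor gap is in the ``yes'' case: a positive answer to your question (some $V > |\rho_s|$ and some $\rho' \succeq \rho_s$ in $T$ with $\varphi_e(\rho', V)$) does \emph{not} show that $\psi$ is essential in $M_k$; it gives one valuation above one threshold, not valuations above every threshold. Your justification ``because $\varphi_e$ is'' is circular: whether $\varphi_e(P,U)$ is essential depends on the path $P$ you have not yet built, and the requirement is only conditional on that. The fix is easy: dichotomise directly on whether $\psi$ is essential. If it is, apply $n$-fairness; if not (with witness $t$), then every path extending $\rho_s$ makes $\varphi_e$ non-essential with the same witness $t$, and the requirement is vacuous.

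The substantive gap is the dead-end problem. In the essential case you obtain a diagonalising $V$ and a node $\rho' \succeq \rho_s$ in $T$ with $\varphi_e(\rho',V)$, and you set $\rho_{s+1} = \rho'$. But nothing prevents $\rho'$ from being a leaf of $T$: your $\psi(U) = (\exists \rho' \succeq \rho_s \text{ in } T)\,\varphi_e(\rho',U)$ ranges over all nodes of $T$ above $\rho_s$, extendible or not. If $\rho'$ is a dead end, the construction stalls --- later stages cannot lengthen $\rho_{s+1}$ inside $T$, and $P = \bigcup_s \rho_s$ is finite. You cannot simply restrict $\psi$ to extendible $\rho'$, since extendibility is $\Pi^{0,C}_1$ and would push $\psi$ out of $\Sigma^{0,C}_1$, so $n$-fairness of $C$ would no longer apply.

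The paper sidesteps this by forcing with infinite subtrees $T = T_0 \supseteq T_1 \supseteq \cdots$ rather than single strings, and by taking
\[
\psi(U) = (\exists \ell)(\forall \tau \in T_s \cap 2^\ell)(\exists \tilde V \subseteq U)\,\varphi(\tau,\tilde V).
\]
The universal quantifier over a whole level is the key move: when $\psi$ is essential and $n$-fairness of $C$ supplies a diagonalising $V$, \emph{every} node at some level forces $\varphi(\cdot,\tilde V)$ for some $\tilde V \subseteq V$, so \emph{every} path through $T_s$ already meets the requirement --- no commitment to a particular (possibly dead) node is needed, and one simply sets $T_{s+1} = T_s$. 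When $\psi$ is not essential, one shrinks to the still-infinite subtree of nodes forcing non-essentiality. Your string-by-string forcing can be repaired by adopting this $\forall\tau$-style $\psi$, but with the purely existential version the dead-end issue is genuine.
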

\begin{proof}
Let~$C$ be a set $n$-fair for some sets~$A_0, A_1 \subseteq 2^{<\omega}$,
and let~$T \subseteq 2^{<\omega}$ be a $C$-computable infinite binary tree.
We construct an infinite decreasing sequence of computable subtrees $T = T_0 \supseteq T_1 \supseteq \dots$
such that for every path $P$ through $\bigcap_s T_s$, $P \oplus C$ is $n$-fair for $A_0, A_1$.
Note that the intersection~$\bigcap_s T_s$ is non-empty since the $T$'s are infinite trees.
More precisely, if we interpret $s$ as a tuple~$\tuple{m, \varphi, M}$ where $\varphi(G,U)$
is an $m$-by-$2^nm$ $\Sigma^{0,C}_1$ formula~$\varphi(G,U)$ and $M$ is an $m$-by-$2^nm$ disjoint matrix $M$,
we want to satisfy the following requirement.

\begin{quote}
$\Rcal_s$ : For every path~$P$ through~$T_{s+1}$, either $\varphi(P, U)$ is not essential in $M$,
or~$\varphi(P, V)$ holds for some $M$-valuation $V$ diagonalizing against $A_0, A_1$.
\end{quote}

Given two $M$-valuations $V_0 = (B_{i,j} : i < m, j < 2^nm)$ and $V_1 = (D_{i,j} :  i < m, j < 2^nm)$, 
we write $V_0 \subseteq V_1$
to denote the pointwise subset relation, that is, for every $i < m$ and every $j < 2^nm$, $B_{i,j} \subseteq D_{i,j}$.
At stage~$s = \tuple{m, \varphi, M}$, given some infinite, computable binary tree~$T_s$, 
define the $m$-by-$2^nm$ $\Sigma^{0,C}_1$ formula
\[
\psi(U) = (\exists n)(\forall \tau \in T_s \cap 2^n)(\exists \tilde{V} \subseteq U)\varphi(\tau, \tilde{V})
\]
We have two cases.
In the first case, $\psi(U)$ is not essential in $M$ with some witness~$t$. By compactness,
the following set is an infinite $C$-computable subtree of~$T_s$:
\[
T_{s+1} = \{ \tau \in T_s : (\mbox{for every } M\mbox{-valuation } V > t)\neg \varphi(\tau, V) \}
\]
The tree $T_{s+1}$ has been defined so that $\varphi(P, U)$
is not essential in $M$ for every~$P \in [T_{s+1}]$.

In the second case, $\psi(U)$ is essential in $M$. By $n$-fairness of $C$ for $A_0, A_1$,
there is an $M$-valuation $V$ diagonalizing against $A_0, A_1$ such that $\psi(V)$ holds.
We claim that for every path~$P \in [T_s]$,
$\varphi(P, \tilde{V})$ holds for some $M$-valuation~$\tilde{V}$ diagonalizing against~$A_0, A_1$.
Fix some path~$P \in [T_s]$. Unfolding the definition of~$\psi(V)$, there is some~$n$ such that $\varphi(P \uh n, \tilde{V})$ holds
for some~$M$-valuation~$\tilde{V} \subseteq V$. 
Since $V$ is diagonalizing against~$A_0, A_1$, for every $i < m$, there is some~$L, R \in V(i)$ 
such that $L \subseteq A_0$ and $R \subseteq A_1$.
Let $\tilde{L}, \tilde{R} \in \tilde{V}(i)$ be such that $\tilde{L} \subseteq L$ and $\tilde{R} \subseteq R$.
In particular, $\tilde{L} \subseteq A_0$ and $\tilde{R} \subseteq A_1$ so $\tilde{V}$ diagonalizes against $A_0, A_1$.
Take~$T_{s+1} = T_s$ and go to the next stage.
This finishes the proof of Theorem~\ref{thm:wkl-n-fairness}.
\end{proof}

As previously noted, preserving $n$-fairness for every~$n$ implies preserving fairness.
However, we really need the fact that $\wkl$ admits $n$-fairness preservation
and not only fairness preservation in the proof of Theorem~\ref{thm:rt12-strong-fairness}.

\begin{corollary}\label{cor:wkl-fairness-preservation}
$\wkl$ admits fairness preservation.
\end{corollary}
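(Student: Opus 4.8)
The plan is to derive Corollary~\ref{cor:wkl-fairness-preservation} directly from Theorem~\ref{thm:wkl-n-fairness}, using nothing more than the elementary observations about fairness already recorded in the excerpt. First I would unfold the definition of "fair": a set $X$ is \emph{fair} for $A_0, A_1$ precisely when it is $n$-fair for $A_0, A_1$ for some $n \geq 1$. So suppose $C$ is fair for some $A_0, A_1 \subseteq 2^{<\omega}$; fix an $n \geq 1$ with $C$ being $n$-fair for $A_0, A_1$, and let $X$ be any $C$-computable $\wkl$-instance, i.e.\ an infinite $C$-computable binary tree.

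Next I would apply Theorem~\ref{thm:wkl-n-fairness} with this particular $n$: since $\wkl$ admits $n$-fairness preservation, there is a solution $Y$ to $X$ (an infinite path through the tree) such that $Y \oplus C$ is $n$-fair for $A_0, A_1$. Then, by the definition of fairness again, $Y \oplus C$ is fair for $A_0, A_1$, because being $n$-fair for a particular $n \geq 1$ is exactly one of the witnesses to fairness. This establishes the required conclusion for $C$ and $X$, and since $C$, $A_0$, $A_1$, $X$ were arbitrary, $\wkl$ admits fairness preservation.

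There is essentially no obstacle here: the content is the already-proved Theorem~\ref{thm:wkl-n-fairness}, and the corollary is just the remark, stated immediately after the definition of fairness preservation, that "if $\Psf$ admits $n$-fairness preservation for every $n$, then it admits fairness preservation" — in fact we only need it for the single $n$ witnessing fairness of $C$. The only mild subtlety worth a sentence is that the $n$ must be fixed \emph{before} invoking the theorem, since Theorem~\ref{thm:wkl-n-fairness} produces, for each fixed $n$, a solution $Y$ whose join with $C$ is $n$-fair for the \emph{same} $n$; this is exactly why the excerpt emphasizes that $\wkl$ admits $n$-fairness preservation and not merely fairness preservation. The formal proof is therefore two lines.

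\begin{proof}
Let $C$ be fair for some sets $A_0, A_1 \subseteq 2^{<\omega}$, and let $X$ be a $C$-computable $\wkl$-instance.
By definition of fairness, there is some $n \geq 1$ such that $C$ is $n$-fair for $A_0, A_1$.
By Theorem~\ref{thm:wkl-n-fairness}, $\wkl$ admits $n$-fairness preservation, so there is a solution $Y$ to $X$
such that $Y \oplus C$ is $n$-fair for $A_0, A_1$, hence fair for $A_0, A_1$.
\end{proof}
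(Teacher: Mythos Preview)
The proposal is correct and is exactly the argument the paper has in mind: the corollary is stated without proof, relying on the remark just before it that preserving $n$-fairness for every $n$ implies preserving fairness, which you have spelled out carefully by fixing the witnessing $n$ and invoking Theorem~\ref{thm:wkl-n-fairness}.
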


Cholak, Jockusch and Slaman~\cite{Cholak2001strength} studied
extensively Ramsey's theorem for pairs in reverse mathematics,
and introduced their cohesive and stable variants.

\begin{definition}[Cohesiveness]
An infinite set $C$ is $\vec{R}$-cohesive for a sequence of sets $R_0, R_1, \dots$
if for each $i \in \omega$, $C \subseteq^{*} R_i$ or $C \subseteq^{*} \overline{R_i}$.
$\coh$ is the statement ``Every uniform sequence of sets $\vec{R}$
has an $\vec{R}$-cohesive set.''
\end{definition}

A coloring $f : [\omega]^{k+1} \to n$ is \emph{stable} if for every~$k$-tuple~$\sigma \in [\omega]^k$,
$\lim_s f(\sigma, s)$ exists. $\srt^k_n$ is the restriction of~$\rt^k_n$ to stable colorings.
Mileti~\cite{Mileti2004Partition} and Jockusch \& Lempp [unpublished]
proved that~$\rt^2_2$ is equivalent to~$\srt^2_2+\coh$ over~$\rca$. Recently,
Chong et al.~\cite{Chong2014metamathematics} proved that~$\srt^2_2$ is strictly weaker than~$\rt^2_2$
over~$\rca$. However they used non-standard models to separate the statements and the question
whether~$\srt^2_2$ and~$\rt^2_2$ coincide over~$\omega$-models remains open.

Cohesiveness can be seen as a sequential version
of~$\rt^1_2$ with finite errors. There is a natural decomposition of~$\rt^2_2$
between $\coh$ and $\Delta^0_2$ instances of~$\rt^1_2$. 
Indeed, given a computable instance~$f : [\omega]^2 \to 2$ of~$\rt^2_2$,
$\coh$ states the existence of an infinite set~$H$ such that $f : [H]^2 \to 2$ is stable.
By Schoenfield's limit lemma~\cite{Shoenfield1959degrees},
the stable coloring $f : [H]^2 \to 2$ can be seen as the $\Delta^0_2$ approximation of a $\emptyset'$-computable
instance $\tilde{f} : H \to 2$ of~$\rt^1_2$. Moreover, we can $H$-compute an infinite $f$-homogeneous set 
from any $\tilde{f}$-homogeneous set.
We shall therefore prove independently fairness preservation of~$\coh$
and strong fairness preservation of~$\rt^1_2$ to deduce that~$\rt^2_2$ admits fairness preservation.

\begin{theorem}\label{thm:coh-n-fairness}
For every $n \geq 1$, $\coh$ admits $n$-fairness preservation.
\end{theorem}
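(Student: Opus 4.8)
\textbf{Proof plan for Theorem~\ref{thm:coh-n-fairness}.}
The plan is to mimic the proof of Theorem~\ref{thm:wkl-n-fairness}, replacing the approximation-by-computable-subtrees construction with a Mathias forcing along a cohesive set. Fix a set $C$ that is $n$-fair for some $A_0, A_1 \subseteq 2^{<\omega}$, and a $C$-computable sequence of sets $\vec R = R_0, R_1, \dots$. The standard way to build an $\vec R$-cohesive set is to force with Mathias conditions $(F, X)$ where $X$ is an infinite set in a fixed $C$-computable ``reservoir'' — here I would take $X$ to range over the sets $\{X_i\}$ of a Scott set $\Scal$ with $\Scal = \{X_i : D = \bigoplus_i X_i\}$ for some $D$ with $D' \leq_T C'$ (equivalently, use a low-over-$C$ Scott set, as in Section~\ref{sect:partitions-reducibility}). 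The usual cohesiveness requirements force, at stage $k$, the reservoir into $R_k$ or into $\overline{R_k}$; these are handled exactly as in the classical $\coh$ argument and are orthogonal to fairness. The new ingredient is a second family of requirements $\Rcal_{\tuple{m,\varphi,M}}$ asserting, for the generic $G$: either $\varphi(G,U)$ is not essential in $M$, or $\varphi(G,V)$ holds for some $M$-valuation $V$ diagonalizing against $A_0, A_1$.

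To meet $\Rcal_{\tuple{m,\varphi,M}}$ from a condition $c = (F,X)$, I would form, in analogy with the $\psi$ of the $\wkl$ proof, the $\Sigma^{0,C}_1$ formula
\[
\psi(U) = (\exists \text{ finite } G \subseteq X)(\exists \tilde V \subseteq U)\,\varphi(F \cup G, \tilde V),
\]
and split into two cases. If $\psi$ is \emph{not} essential in $M$, with witness $t$, then no $M$-valuation $V > t$ can satisfy $\psi$; hence for no finite $G \subseteq X$ and no $M$-valuation $V > t$ does $\varphi(F\cup G, V)$ hold, so $c$ itself already forces ``$\varphi(G,U)$ is not essential in $M$'' and we take $d = c$. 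If $\psi$ \emph{is} essential in $M$, then by $n$-fairness of $C$ there is an $M$-valuation $V$ diagonalizing against $A_0, A_1$ with $\psi(V)$; unfolding $\psi$ gives a finite $G \subseteq X$ and an $M$-valuation $\tilde V \subseteq V$ with $\varphi(F\cup G, \tilde V)$, and — exactly as in the $\wkl$ proof — $\tilde V \subseteq V$ still diagonalizes against $A_0, A_1$ since the subsets $\tilde L \subseteq L \subseteq A_0$ and $\tilde R \subseteq R \subseteq A_1$ are inherited rowwise. Then $d = (F \cup G, X \setminus [0,\max(F\cup G)])$ forces $\Rcal_{\tuple{m,\varphi,M}}$. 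Interleaving the cohesiveness requirements with these fairness requirements gives a generic $G$ which is $\vec R$-cohesive and such that $G \oplus C$ is $n$-fair for $A_0, A_1$.

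The main obstacle — and the reason the Scott-set reservoir matters — is keeping every step effective enough to carry out along the forcing, and in particular making sure the extension $d$ genuinely exists (its reservoir $X \setminus [0,\max(F\cup G)]$ is still one of the $X_i$'s, or at least still infinite and in $\Scal$). As in Case~1 of the Section~\ref{sect:partitions-reducibility} construction, deciding which case of the $\psi$-dichotomy we are in is only $C'$-computable, but we do not need to produce $d$ uniformly: the requirement merely asserts existence of a suitable extension, and the Scott-set closure of $\Scal$ guarantees that the reservoir stays inside the ideal. A secondary subtlety is bookkeeping the two interleaved requirement streams so that the final $G$ is infinite; this is routine since each fairness step either does nothing to the reservoir or removes only an initial segment, and the cohesiveness steps are the classical ones. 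Once this is in place, every path of the construction yields the desired solution, completing the proof.
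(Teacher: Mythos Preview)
Your core argument is correct and matches the paper's: Mathias forcing, an auxiliary $\Sigma^{0,C}_1$ formula $\psi(U)$ quantifying existentially over finite extensions inside the reservoir, then applying $n$-fairness of $C$ to $\psi$ and splitting into the two cases exactly as you describe.

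Two simplifications are worth noting. The Scott-set machinery you flag as ``the main obstacle'' is entirely unnecessary here: the paper simply forces with $C$-computable reservoirs throughout, and this class is closed under the cohesiveness step (intersect with $R_k$ or $\overline{R_k}$, both $C$-computable) and under the fairness step (remove an initial segment). There is no effectiveness obstacle at all, and in fact the construction need not be effective in the first place---only existence of $G$ is claimed. Worse, had you genuinely used a non-$C$-computable reservoir $X$ from your Scott set $\Scal$, your $\psi$ would only be $\Sigma^{0,X\oplus C}_1$, and you could no longer invoke the $n$-fairness of $C$; so the extra machinery is not just superfluous but potentially dangerous if taken literally. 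Second, the inner quantifier ``$(\exists \tilde V \subseteq U)$'' you import from the $\wkl$ proof is also unneeded: the paper takes simply $\psi(U) = (\exists G \supseteq F)[G \subseteq F\cup X \wedge \varphi(G,U)]$, so that in the positive case the diagonalizing valuation $V$ itself already satisfies $\varphi(E,V)$ for some finite $E$, with no passage to a sub-valuation.
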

\begin{proof}
Let~$C$ be a set $n$-fair for some sets~$A_0, A_1 \subseteq 2^{<\omega}$,
and let~$R_0, R_1, \dots$ be a $C$-computable sequence of sets.
We will construct an $\vec{R}$-cohesive set $G$ such that 
$G \oplus C$ is $n$-fair for~$A_0, A_1$.
The construction is done by a Mathias forcing, whose conditions are pairs $(F, X)$
where $X$ is a $C$-computable set. The result is a direct consequence of the following lemma.

\begin{lemma}\label{lem:coh-preservation-lemma}
For every condition~$(F, X)$, every $m$-by-$2^nm$ $\Sigma^{0,C}_1$ formula~$\varphi(G, U)$
and every $m$-by-$2^nm$ disjoint matrix $M$, there exists an extension~$d = (E, Y)$ such that
either $\varphi(G, U)$ is not essential for every set $G$ satisfying $d$, or $\varphi(E, V)$ holds
for some $M$-valuation $V$ diagonalizing against $A_0, A_1$.
\end{lemma}
\begin{proof}
Define the $m$-by-$2^nm$ $\Sigma^{0,C}_1$ formula
$\psi(U) = (\exists G \supseteq F)[(G \subseteq F \cup X) \wedge \varphi(G, U)]$.
By $n$-fairness of $C$ for $A_0, A_1$, 
either $\psi(U)$ is not essential in $M$, or~$\psi(V)$ holds for some $M$-valuation $V$
diagonalizing against $A_0, A_1$.
In the former case, the condition~$(F,X)$ already satisfies the desired property.
In the latter case, by the finite use property, there exists a finite set~$E$ satisfying~$(F, X)$ such that~$\varphi(E, V)$ holds.
Let $Y = X \setminus [0, max(E)]$. The condition $(E, Y)$ is a valid extension.
\end{proof}

Using Lemma~\ref{lem:coh-preservation-lemma}, define an infinite descending sequence 
of conditions~$c_0 = (\emptyset, \omega) \geq c_1 \geq \dots$
such that for each~$s \in \omega$
\begin{itemize}
	\item[(i)] $|F_s| \geq s$
	\item[(ii)] $X_{s+1} \subseteq R_s$ or $X_{s+1} \subseteq \overline{R}_s$
	\item[(iii)] $\varphi(G, U)$ is not essential in $M$ for every set $G$ satisfying $c_{s+1}$, 
	or $\varphi(F_{s+1}, V)$ holds
	for some $M$-valuation $V$ diagonalizing against $A_0, A_1$ if~$s = \tuple{\varphi, M}$
\end{itemize}
where~$c_s = (F_s, X_s)$. The set $G = \bigcup_s F_s$ is infinite by (i),
$\vec{R}$-cohesive by (ii) and $G \oplus C$ is $n$-fair for~$A_0, A_1$ by (iii).
This finishes the proof of Theorem~\ref{thm:coh-n-fairness}.
\end{proof}

\begin{corollary}
$\coh$ admits fairness preservation.
\end{corollary}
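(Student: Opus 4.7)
The plan is to derive this corollary as an immediate unpacking of definitions from Theorem~\ref{thm:coh-n-fairness}. Recall the definition: a set $X$ is fair for $A_0, A_1$ precisely when it is $n$-fair for $A_0, A_1$ for some $n \geq 1$. Thus, suppose $C$ is fair for some $A_0, A_1 \subseteq 2^{<\omega}$ and let $\vec{R}$ be a $C$-computable $\coh$-instance. First, I would fix an integer $n \geq 1$ witnessing that $C$ is $n$-fair for $A_0, A_1$. Applying Theorem~\ref{thm:coh-n-fairness} for this particular value of $n$ yields an $\vec{R}$-cohesive set $G$ such that $G \oplus C$ is $n$-fair for $A_0, A_1$. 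By definition of fairness, $G \oplus C$ is then fair for $A_0, A_1$, completing the verification.

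I do not expect any obstacle in writing this proof: the work has been done in Theorem~\ref{thm:coh-n-fairness}, and the only thing to verify is that the witness $n$ is preserved on the nose by the $n$-fairness preservation statement, which indeed it is (the theorem produces a solution that is $n$-fair for the \emph{same} $n$). This mirrors exactly the way Corollary~\ref{cor:wkl-fairness-preservation} was derived from Theorem~\ref{thm:wkl-n-fairness}, and reflects the general observation made in the text that preserving $n$-fairness for every $n$, or even merely for the particular $n$ supplied by the hypothesis, suffices to preserve fairness.
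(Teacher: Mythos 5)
Your proposal is correct and is exactly the argument the paper intends: the corollary is stated without proof, following the remark that preserving $n$-fairness for every $n$ implies preserving fairness, and your unpacking (fix the witness $n$ for $C$, apply Theorem~\ref{thm:coh-n-fairness} for that same $n$, conclude fairness of $G \oplus C$) is the standard verification of that remark.
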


The next theorem is the reason why we use the notion of
fairness instead of $n$-fairness
in our separation of~$\rt^2_2$ from~$\tto^2_2$.
Indeed, given an instance of~$\rt^1_2$ and a set~$C$
which is $n$-fair for some sets~$A_0, A_1$, the proof
constructs a solution $H$ such that~$H \oplus C$ is $(n+1)$-fair for~$A_0, A_1$.
We shall see in Corollary~\ref{cor:rt12-not-strong-1-fairness} that the proof is optimal,
in the sense that $\rt^1_2$ does not admit strong~$n$-fairness preservation.

\begin{theorem}\label{thm:rt12-strong-fairness}
$\rt^1_2$ admits strong fairness preservation.
\end{theorem}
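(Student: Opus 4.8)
The plan is to mimic the one-step diagonalization argument from Section~\ref{sect:partitions-reducibility}, but lifted to the abstract setting of fairness: instead of diagonalizing against a concrete collection of tree functionals, I will diagonalize against an arbitrary $\Sigma^{0,C}_1$ formula that is essential in a disjoint matrix. Fix sets $A_0,A_1\subseteq 2^{<\omega}$, a set $C$ that is $n$-fair for $A_0,A_1$, and an arbitrary $\rt^1_2$-instance $B_0\cup B_1=\omega$ (with no effectiveness restriction, since we want \emph{strong} preservation). As in the motivating section I would build, by Mathias forcing, an infinite set $G$ such that both $G\cap B_0$ and $G\cap B_1$ are infinite, and such that one of them, call it $H$, satisfies $H\oplus C$ is $(n+1)$-fair for $A_0,A_1$; then $H$ is the desired solution. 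The conditions are pairs $(F,X)$ with $F$ finite, $X$ infinite, $F<X$, and $X$ in a suitable Scott set containing $C$ (this gives the compactness we need below); we also fix $C$ so it does not already compute a non-fair-destroying object in the trivial sense, or else we are done.

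The forcing has two kinds of requirements. The first, as in Lemma~\ref{lem:rt12-comp-reduc-force-N}, ensures that $G\cap B_i$ is infinite for each $i<2$: if $X\cap B_i=\emptyset$ then $X\subseteq B_{1-i}$ is an infinite $C$-computable homogeneous set and we are done, so we may always extend. The second kind, indexed by a pair $(e_0,e_1)$ of "abstract requirement indices", is the analogue of $\Qcal_{e_0,e_1}$: given an $m$-by-$2^{n}m$ $\Sigma^{0,?}_1$ formula $\varphi_{e_i}$ (with the oracle slot to be filled by $(G\cap B_i)\oplus C$) essential in a disjoint matrix $M_i$, we want to force, for \emph{one} side $i$, that $\varphi_{e_i}$ is satisfied by an $M_i$-valuation diagonalizing against $A_0,A_1$. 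The key point is the "pairing": if for every pair $(e_0,e_1)$ we force one side, then by König's-lemma-style argument on the binary tree of choices, \emph{one fixed side} $H\in\{G\cap B_0, G\cap B_1\}$ has all its requirements forced, and that is the side we keep. The reason this yields $(n+1)$-fairness rather than $n$-fairness is exactly the combinatorial obstruction below: to handle the two sides simultaneously we must merge a matrix from the left side and a matrix from the right side, and Lemma~\ref{lem:matrix-combi} (applied with the doubled width) forces us to work with $m$-by-$2^{n}m$ rows for the individual sides but produces diagonalization data only for an $m$-by-$2^{n+1}m$ disjoint matrix on the solution side.

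The core extension lemma is where the real work lies, and it is the abstract version of the Case~1/Case~2 split in Section~\ref{sect:partitions-reducibility}. Given a condition $(F,X)$ and indices $e_0,e_1$ with associated matrices $M_0,M_1$, first combine $M_0$ and $M_1$ into a single matrix $M$ whose rows have $2^{n+1}m$ incomparable strings (using Lemma~\ref{lem:matrix-combi} to extract, from each pair of corresponding rows, enough pairwise-incomparable strings so that within each combined row the strings from side $0$ and side $1$ are jointly incomparable). Then form the merged $\Sigma^{0,C}_1$ formula
\[
\psi(U) \;=\; \bigl(\forall Z_0\cup Z_1=X\bigr)\bigl(\exists i<2\bigr)\bigl(\exists G'\subseteq Z_i\bigr)\,\varphi_{e_i}\bigl(((F\cap B_i)\cup G')\oplus C, \tilde U_i\bigr),
\]
which — exactly as in the remark that "the question looks $\Pi^1_2$ but is in fact $\Sigma^0_1$ by compactness", using that $X$ lies in a Scott set — is genuinely $\Sigma^{0,C}_1$ in the valuation variables $U$. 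Now apply $n$-fairness of $C$ to $\psi$ and $M$: either $\psi$ is not essential in $M$, in which case (by compactness, passing to a $\Pi^{0,X}_1$ subclass inside the Scott set) we find an infinite $Z_i\in\Scal$ so that $(F,Z_i)$ forces $\varphi_{e_i}$ to be unsatisfiable by far valuations, i.e.\ non-essential relative to every solution-side set, hence vacuously forces the requirement; or $\psi$ is essential, so $n$-fairness yields an $M$-valuation $V$ diagonalizing against $A_0,A_1$ with $\psi(V)$, whence by the finite-use property there is a finite $E\subseteq X$ and a side $i$ such that $\varphi_{e_i}$ is already satisfied on side $i$ by the sub-valuation of $V$ restricted to the side-$i$ columns, and that sub-valuation still diagonalizes against $A_0,A_1$ because $V$ does. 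Setting $d=(F\cup (E\cap B_i), X\setminus[0,\max E])$ gives the extension, forcing the requirement on side $i$. Iterating through all requirements (and interleaving the "make both sides infinite" steps) produces the generic $G$; the side $H$ on which all requirements are forced has $H\oplus C$ being $(n+1)$-fair, hence fair, for $A_0,A_1$, which is what strong fairness preservation demands.

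The main obstacle, I expect, is bookkeeping the matrix-merging correctly: one must verify that after applying Lemma~\ref{lem:matrix-combi} to pass from the two width-$2^{n}m$ side-matrices to a single width-$2^{n+1}m$ disjoint matrix, the sub-valuations obtained on a single side are genuinely $M_i$-valuations (their sets extend the right $\sigma_{i,j}$) and that "diagonalizing against $A_0,A_1$" is inherited when passing to sub-valuations — this is the same point that makes the width double and is precisely why the theorem gives fairness, not $n$-fairness, preservation. The compactness/Scott-set argument showing $\psi$ is $\Sigma^{0,C}_1$ and that Case~1 produces an infinite $Z_i$ in $\Scal$ is routine given the earlier discussion, but it is essential that we chose the forcing conditions to live in a Scott set rather than just a Turing ideal.
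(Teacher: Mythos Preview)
Your overall architecture --- Mathias forcing with two parallel sides, a disjunctive requirement per pair of formula/matrix data, and a case split on whether a merged formula $\psi$ is essential --- matches the paper's. But two technical choices are wrong and each breaks the argument. First, the matrix bookkeeping is inverted. To conclude $(n+1)$-fairness of $(G\cap B_i)\oplus C$, the \emph{side} requirements must already concern $m$-by-$2^{n+1}m$ formulas and disjoint matrices $M_i$, not $m$-by-$2^n m$ ones; the paper then stacks $M_0$ over $M_1$ \emph{vertically} into a $2m$-by-$2^{n+1}m$ disjoint matrix, and since $2^{n+1}m=2^n(2m)$ this is exactly the shape (with $m'=2m$) to which $n$-fairness of the reservoir applies. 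Your horizontal merge via Lemma~\ref{lem:matrix-combi} neither yields a disjoint matrix in general (row $i$ of $M_0$ and row $i$ of $M_1$ may contain comparable strings) nor produces the correct row/column ratio for $n$-fairness; and with side formulas of width $2^n m$ you would at best be verifying $n$-fairness of the output, which $\rt^1_2$ provably does \emph{not} preserve (Corollary~\ref{cor:rt12-not-strong-1-fairness}). Lemma~\ref{lem:matrix-combi} plays no role in this proof; it is used only in Lemma~\ref{lem:partition-emptyset-fair}.

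Second, the Scott-set device from Section~\ref{sect:partitions-reducibility} does not transfer. There the Scott set was coded by a single low oracle we were free to choose; here $C$ is \emph{given} and need not code a Scott set, and mere membership of $X$ in some Scott set containing $C$ does not make your $\psi$ a $\Sigma^{0,C}_1$ formula (it has $X$ as a parameter, hence is only $\Sigma^{0,X\oplus C}_1$), nor does it ensure that the new reservoir $Z_i$ obtained in the non-essential case still supports further applications of fairness. The paper instead takes as conditions exactly those $(F,X)$ with $X\oplus C$ $n$-fair, applies $n$-fairness of $X\oplus C$ (not of $C$) to $\psi$, and in the non-essential case invokes Theorem~\ref{thm:wkl-n-fairness} to extract $Z_0\oplus Z_1$ from the relevant $\Pi^{0,X\oplus C}_1$ class with $Z_0\oplus Z_1\oplus C$ still $n$-fair --- which is, as the paper stresses, precisely where $n$-fairness preservation (not merely fairness preservation) of $\wkl$ is required.
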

\begin{proof}
Let~$C$ be a set $n$-fair for some sets~$A_0, A_1 \subseteq 2^{<\omega}$,
and let~$B_0 \cup B_1 = \omega$ be a (non-necessarily computable) 2-partition of~$\omega$.
Suppose that there is no infinite set $H \subseteq B_0$ or $H \subseteq B_1$
such that $H \oplus C$ is $n$-fair for~$A_0, A_1$, since otherwise we are done.
We construct a set $G$ such that both $G \cap B_0$ and $G \cap B_1$ are infinite.
We need therefore to satisfy the following requirements for each~$p \in \omega$.
\[
  \Ncal_p : \hspace{20pt} (\exists q_0 > p)[q_0 \in G \cap B_0] 
		\hspace{20pt} \wedge \hspace{20pt} (\exists q_1 > p)[q_1 \in G \cap B_1] 
\]
Furthermore, we want to ensure that one of $(G \cap B_0) \oplus C$ 
and $(G \cap B_1) \oplus C$ is fair for~$A_0, A_1$. To do this, we will satisfy the following requirements
for every integer $m$, every $m$-by-$2^{n+1}m$ $\Sigma^{0,C}_1$ formulas $\varphi_0(H, U)$ and  $\varphi_1(H, U)$
and every $m$-by-$2^{n+1}m$ disjoint matrices $M_0$ and $M_1$.
\[
  \Qcal_{\varphi_0, M_0, \varphi_1, M_1} : \hspace{20pt} 
		\Rcal_{\varphi_0,M_0}^{G \cap B_0} \hspace{20pt} \vee \hspace{20pt}  \Rcal_{\varphi_1,M_1}^{G \cap B_1}
\]
where $\Rcal_{\varphi, M}^H$ holds if $\varphi(H, U)$ is not essential in $M$
or $\varphi(H, V)$ holds for some $M$-valuation $V$ diagonalizing against $A_0, A_1$.
We first justify that if every $\Qcal$-requirement is satisfied, then either $(G \cap B_0) \oplus C$
or $(G \cap B_1) \oplus C$ is $(n+1)$-fair for $A_0, A_1$.
By the usual pairing argument, for every~$m$, there is some side $i < 2$ such that
the following property holds:
\begin{quote}
(P) For every $m$-by-$2^{n+1}m$ $\Sigma^{0,C}_1$ formula $\varphi(G \cap B_i, U)$ 
and every $m$-by-$2^{n+1}m$ disjoint matrix $M$, either $\varphi(G \cap B_i, U)$
is not essential in $M$, or $\varphi(G \cap B_i, V)$ holds for some $M$-valuation~$V$
diagonalizing against $A_0, A_1$.
\end{quote}
By the infinite pigeonhole principle, 
there is some side $i < 2$ such that (P) holds for infinitely many~$m$.
By a cropping argument, if (P) holds for $m$ and $q < m$, then (P) holds for~$q$.
Therefore (P) holds for every $m$ on side~$i$. In other words, $(G \cap B_i) \oplus C$
is $(n+1)$-fair for~$A_0, A_1$.

We construct our set $G$ by forcing. Our conditions are Mathias conditions~$(F, X)$,
such that~$X \oplus C$ is $n$-fair for~$A_0, A_1$.
We now prove the progress lemma, stating that we can force both $G \cap B_0$
and $G \cap B_1$ to be infinite.

\begin{lemma}\label{lem:rt12-fairness-progress}
For every condition~$c = (F, X)$, every $i < 2$ and every~$p \in \omega$
there is some extension~$d = (E, Y)$ such that~$E \cap B_i \cap (p,+\infty) \neq \emptyset$.
\end{lemma}
\begin{proof}
Fix~$c$, $i$ and~$p$. If $X \cap B_i \cap (p,+\infty) = \emptyset$,
then $X \cap (p,+\infty)$ is an infinite subset of~$B_{1-i}$.
Moreover, $X \cap (p,+\infty)$ is $n$-fair for~$A_0, A_1$, contradicting our hypothesis.
Thus, there is some~$q > p$ such that $q \in X \cap B_i \cap (p,+\infty)$.
Take $d = (F \cup \{q\}, X \setminus [0,q])$ as the desired extension.
\end{proof}

Next, we prove the core lemma stating that we can satisfy each $\Qcal$-requirement.
A condition~$c$ \emph{forces} a requirement~$\Qcal$
if $\Qcal$ is holds for every set~$G$ satisfying~$c$.
This is the place where we really need the fact that~$\wkl$
admits $n$-fairness preservation and not only fairness preservation.

\begin{lemma}\label{lem:rt12-fairness-forcing}
For every condition~$c = (F, X)$, every integer $m$, every $m$-by-$2^{n+1}m$ $\Sigma^{0,C}_1$ formulas $\varphi_0(H, U)$ and  $\varphi_1(H, U)$ and every $m$-by-$2^{n+1}m$ disjoint matrices $M_0$ and $M_1$,
there is an extension~$d = (E, Y)$ forcing~$\Qcal_{\varphi_0, M_0, \varphi_1, M_1}$.
\end{lemma}
\begin{proof}
Let~$\psi(U_0,U_1)$ be the $2m$-by-$2^{n+1}m$ $\Sigma^{0,X \oplus C}_1$ formula which holds
if for every 2-partition~$Z_0 \cup Z_1 = X$, there is some~$i < 2$,
some finite set $E \subseteq Z_i$
and an $m$-by-$2^{n+1}m$ $M_i$-valuation $V \subseteq U_i$ such that $\varphi_i((F \cap B_i) \cup E, V)$
holds. By $n$-fairness of~$X \oplus C$, we have two cases.

In the first case, $\psi(U_0, U_1)$ is not essential in~$M_0,M_1$, with some witness~$t$.
By compactness, the $\Pi^{0, X \oplus C}_1$ class~$\Ccal$ of sets~$Z_0 \oplus Z_1$
such that $Z_0 \cup Z_1 = \omega$ and for every~$i < 2$
and every finite set $E \subseteq Z_i$, there is no $M_i$-valuation $V > t$
such that $\varphi_i((F \cap B_i) \cup E, V)$ holds is non-empty.
By $n$-fairness preservation of~$\wkl$ (Theorem~\ref{thm:wkl-n-fairness}), there is a 2-partition
$Z_0 \oplus Z_1 \in \Ccal$ such that~$Z_0 \oplus Z_1 \oplus C$ is $n$-fair for~$A_0, A_1$.
Since $Z_0 \cup Z_1 = X$, there is some~$i < 2$ such that~$Z_i$ is infinite. Take such an~$i$.
The condition~$d = (F, Z_i)$ is an extension forcing $\Qcal_{\varphi_0, M_0, \varphi_1, M_1}$
by the $i$th side.

In the second case, $\psi(V_0, V_1)$ holds for some $(M_0,M_1)$-valuation $(V_0, V_1)$
diagonalizing against $A_0, A_1$. Let~$Z_0 = X \cap B_0$ and~$Z_1 = X \cap B_1$.
By hypothesis, there is some $i < 2$, some finite set~$E \subseteq Z_i = X \cap B_i$
and some $M_i$-valuation~$V \subseteq V_i$ such that $\varphi_i((F \cap B_i) \cup E, V)$ holds.
Since $V \subseteq V_i$, the $M_i$-valuation~$V$ diagonalizes against~$A_0, A_1$.
The condition~$d = (F \cup E, X \setminus [0, max(E)])$ is an extension
forcing $\Qcal_{\varphi_0, M_0, \varphi_1, M_1}$ by the $i$th side.
\end{proof}

Using Lemma~\ref{lem:rt12-fairness-progress} and Lemma~\ref{lem:rt12-fairness-forcing}, define an infinite descending sequence 
of conditions~$c_0 = (\emptyset, \omega) \geq c_1 \geq \dots$
such that for each~$s \in \omega$
\begin{itemize}
	\item[(i)] $|F_s \cap B_0| \geq s$ and~$|F_s \cap B_1| \geq s$
	\item[(ii)] $c_{s+1}$ forces~$\Qcal_{\varphi_0, M_0, \varphi_1, M_1}$
	if $s = \tuple{\varphi_0, M_0, \varphi_1, M_1}$
\end{itemize}
where~$c_s = (F_s, X_s)$. Let~$G = \bigcup_s F_s$.
The sets~$G \cap B_0$ and~$G \cap B_1$ are both infinite by (i)
and one of~$G \cap B_0$ and~$G \cap B_1$ is fair for~$A_0, A_1$ by (ii).
This finishes the proof of Theorem~\ref{thm:rt12-strong-fairness}.
\end{proof}

\begin{theorem}\label{thm:rt22-fairness-preservation}
$\rt^2_2$ admits fairness preservation.
\end{theorem}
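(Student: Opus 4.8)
The plan is to combine the preservation results already established for $\coh$ and $\rt^1_2$, using the standard decomposition of $\rt^2_2$ via cohesiveness and $\Delta^0_2$ instances of $\rt^1_2$, together with the modularity of fairness preservation. Let $C$ be a set fair for some $A_0, A_1 \subseteq 2^{<\omega}$, say $n$-fair, and let $f : [\omega]^2 \to 2$ be a $C$-computable instance of $\rt^2_2$. First I would apply $n$-fairness preservation of $\coh$ (Theorem~\ref{thm:coh-n-fairness}) to the $C$-computable sequence of sets $R_x = \{ y : f(x,y) = 1 \}$ to obtain an $\vec{R}$-cohesive set $G$ such that $G \oplus C$ is $n$-fair for $A_0, A_1$. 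Then $f$ restricted to $[G]^2$ is stable, so by Shoenfield's limit lemma it is the $\Delta^{0,G\oplus C}_2$ approximation of a $(G\oplus C)'$-computable instance $\tilde f$ of $\rt^1_2$, and any $\tilde f$-homogeneous set $H$ yields a $G$-computable infinite $f$-homogeneous set (intersecting $H$ with $G$ and taking the appropriate side).

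The second step is to apply \emph{strong} fairness preservation of $\rt^1_2$ (Theorem~\ref{thm:rt12-strong-fairness}) to the $\rt^1_2$-instance $\tilde f$ and the set $G \oplus C$, which is $n$-fair for $A_0, A_1$. This produces a solution $H$ to $\tilde f$ such that $H \oplus (G \oplus C)$ is fair for $A_0, A_1$ — note this is where we genuinely need the \emph{strong} version, since $\tilde f$ is only $(G\oplus C)'$-computable and not $(G\oplus C)$-computable. Finally, from $H$ and $G$ we $G\oplus H$-compute an infinite $f$-homogeneous set $Y$; since $Y \leq_T G \oplus H \leq_T H \oplus (G \oplus C)$ and fairness is downward closed under Turing reducibility (as observed after the definition of fairness), the set $Y \oplus C$ is fair for $A_0, A_1$. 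This is exactly what fairness preservation for $\rt^2_2$ demands.

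The main subtlety to get right is bookkeeping the parameter $n$ and the switch between $n$-fairness and plain fairness. The cohesiveness step preserves the \emph{same} $n$, but the $\rt^1_2$ step only guarantees $(n+1)$-fairness of one of the two sides and hence only plain fairness of the composed set; this is why the theorem is stated for fairness preservation rather than $n$-fairness preservation, and why one cannot hope for more (cf.\ the remark preceding Theorem~\ref{thm:rt12-strong-fairness}). One should also verify the elementary computability facts surrounding the decomposition: that $\vec R$ is $C$-computable from $f$, that cohesiveness for $\vec R$ yields stability of $f$ on $[G]^2$, that Shoenfield's limit lemma applies relative to $G \oplus C$, and that a solution to $\tilde f$ computes (relative to $G$) a solution to $f$. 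None of these presents a real obstacle — they are the routine glue of the Cholak--Jockusch--Slaman / Mileti decomposition — so the proof is essentially a two-line composition of the two preceding theorems plus downward closure of fairness.
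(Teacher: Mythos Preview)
Your proposal is correct and follows essentially the same route as the paper: apply ($n$-)fairness preservation of $\coh$ to the sequence $R_x = \{y : f(x,y)=1\}$, then strong fairness preservation of $\rt^1_2$ to the limit coloring $\tilde f$, and finally pull back to an $f$-homogeneous set via downward closure of fairness. If anything, your write-up is slightly more careful than the paper's in tracking the $n$-parameter and in making explicit the downward-closure step that gets from $H \oplus G \oplus C$ fair to $Y \oplus C$ fair.
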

\begin{proof}
Fix any set~$C$ fair for some sets $A_0, A_1 \subseteq 2^{<\omega}$ and any $C$-computable
coloring $f : [\omega]^2 \to 2$.
Consider the uniformly~$C$-computable sequence of sets~$\vec{R}$ defined for each~$x \in \omega$ by
\[
R_x = \{s \in \omega : f(x,s) = 1\}
\]
As~$\coh$ admits fairness preservation, there is
some~$\vec{R}$-cohesive set~$G$ such that $G \oplus C$ is fair for $A_0, A_1$.
The set~$G$ induces a $(G \oplus C)'$-computable coloring~$\tilde{f} : \omega \to 2$ defined by:
\[
(\forall x \in \omega) \tilde{f}(x) = \lim_{s \in G} f(x,s)
\]
As~$\rt^1_2$ admits strong fairness preservation,
there is an infinite $\tilde{f}$-homogeneous set~$H$ such that
$H \oplus G \oplus C$ is fair for $A_0, A_1$.
The set $H \oplus G \oplus C$ computes an infinite $f$-homogeneous set.
\end{proof}

\begin{corollary}\label{cor:rt22+wkl-not-tt22}
$\rt^2_2 \wedge \wkl$ does not imply $\tto^2_2$ over~$\rca$.
\end{corollary}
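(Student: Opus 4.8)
The plan is to observe that all the substantive work has already been done, and that it only remains to assemble the pieces using the compositional nature of fairness preservation together with Lemma~\ref{lem:fairness-provability}. The key point is that $\rt^2_2 \wedge \wkl$, viewed as a single $\Pi^1_2$ statement, inherits fairness preservation from its two conjuncts.

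First I would verify that $\rt^2_2 \wedge \wkl$ admits fairness preservation. An instance of this statement is a pair $(f, T)$ consisting of a coloring $f : [\omega]^2 \to 2$ and an infinite binary tree $T$, and a solution is a pair $(H, P)$ where $H$ is an infinite $f$-homogeneous set and $P$ is a path through $T$. So fix sets $A_0, A_1 \subseteq 2^{<\omega}$, a set $C$ fair for $A_0, A_1$, and a $C$-computable instance $(f, T)$. Since $T$ is a $C$-computable infinite binary tree, Corollary~\ref{cor:wkl-fairness-preservation} yields a path $P$ through $T$ such that $P \oplus C$ is fair for $A_0, A_1$. Now $f$ is $C$-computable, hence $(P \oplus C)$-computable, so Theorem~\ref{thm:rt22-fairness-preservation} applied to the fair set $P \oplus C$ provides an infinite $f$-homogeneous set $H$ such that $H \oplus (P \oplus C)$ is fair for $A_0, A_1$. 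Then $(H, P)$ is a solution to $(f, T)$, and $(H \oplus P) \oplus C \equiv_T H \oplus P \oplus C$ is fair for $A_0, A_1$, which is exactly what fairness preservation for $\rt^2_2 \wedge \wkl$ requires.

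Finally I would conclude as follows: by Theorem~\ref{thm:tt22-not-fairness}, $\tto^2_2$ does not admit fairness preservation. Applying Lemma~\ref{lem:fairness-provability} with $\Psf = \rt^2_2 \wedge \wkl$ and $\Qsf = \tto^2_2$ then gives that $\rt^2_2 \wedge \wkl$ does not imply $\tto^2_2$ over $\rca$. There is no real obstacle at this stage, since the difficulty was entirely absorbed into the earlier theorems; the only minor point to be careful about is the order of application — one should extract the $\wkl$-solution first, preserving fairness, and only then solve the $\rt^2_2$-instance relative to the resulting fair set — though in fact either order works, as a path through a $C$-computable tree leaves the coloring $C$-computable throughout.
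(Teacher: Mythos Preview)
Your proposal is correct and follows essentially the same approach as the paper: both invoke Theorem~\ref{thm:rt22-fairness-preservation}, Corollary~\ref{cor:wkl-fairness-preservation}, and Theorem~\ref{thm:tt22-not-fairness}, and then conclude via Lemma~\ref{lem:fairness-provability}. The only difference is that you explicitly verify that the conjunction $\rt^2_2 \wedge \wkl$ admits fairness preservation by encoding its instances as pairs and composing the two preservation results, whereas the paper leaves this step implicit; your extra care is harmless and arguably clarifying.
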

\begin{proof}
By Theorem~\ref{thm:rt22-fairness-preservation}
and Corollary~\ref{cor:wkl-fairness-preservation}, $\rt^2_2$ and $\wkl$ admit fairness preservation.
By Theorem~\ref{thm:tt22-not-fairness}, $\tto^2_2$ does not admit fairness preservation.
We conclude by Lemma~\ref{lem:fairness-provability}.
\end{proof}

We now prove the optimality of Theorem~\ref{thm:rt12-strong-fairness} and Theorem~\ref{thm:rt22-fairness-preservation}
by showing that~$n$-fairness cannot be preserved.

\begin{theorem}\label{thm:rt22-not-1-fairness-preservation}
$\srt^2_2$ does not admit $n$-fairness preservation for any $n \geq 1$.
\end{theorem}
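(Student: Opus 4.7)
The plan is to adapt the argument of Theorem~\ref{thm:tt22-not-fairness} from $\tto^2_2$ to the $\srt^2_2$ setting. Fix $n \geq 1$. I begin by taking $A_0 \cup A_1 = 2^{<\omega}$ to be the $\Delta^0_2$ partition constructed in Lemma~\ref{lem:partition-emptyset-fair}, so that $\emptyset$ is $1$-fair and a fortiori $n$-fair for $A_0, A_1$. By Schoenfield's limit lemma, fix a computable function $h : 2^{<\omega} \times \omega \to 2$ such that $\lim_s h(\sigma, s)$ exists and $\sigma \in A_{\lim_s h(\sigma, s)}$ for every $\sigma \in 2^{<\omega}$. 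Using a computable length-increasing bijection $\omega \to 2^{<\omega}$, define a computable stable coloring $f : [\omega]^2 \to 2$ by $f(x, y) = h(x, y)$ for $x < y$; its limit function is the characteristic function of the partition $A_0, A_1$ transported to $\omega$, so $f$ is computable and stable.

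Let $H$ be any infinite $f$-homogeneous set of color $c < 2$; then $H \subseteq A_c$ modulo a finite exception. The key step is showing that $H$ (equivalently, $H \oplus \emptyset$) fails to be $n$-fair for $A_0, A_1$. I exhibit a $1$-by-$2^n$ $\Sigma^{0, H}_1$ formula $\varphi(U_0, \ldots, U_{2^n - 1})$ stating that each $U_j$ is a non-empty subset of $H$, together with a $1$-by-$2^n$ disjoint matrix $M = (\sigma_0, \ldots, \sigma_{2^n - 1})$ in which $\varphi$ is essential. Any $M$-valuation $V$ satisfying $\varphi$ has each $B_j \subseteq H \subseteq A_c$ (modulo the finite exception), so no $B_j \subseteq A_{1-c}$, precluding any diagonalization of $V$ against $A_0, A_1$.

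The main obstacle is making $\varphi$ essential in some disjoint matrix. Essentiality requires finding $2^n$ pairwise incomparable strings each with infinitely many $H$-extensions, equivalently that the tree $T_H = \{\sigma : H \cap \{\tau : \tau \succeq \sigma\} \text{ is infinite}\}$ contains $2^n$ pairwise incomparable nodes. In the generic case where $T_H$ has enough branching, I pick the $\sigma_j$ among its branching witnesses and the argument proceeds directly. The delicate case is a \emph{thin} $H$ accumulating toward a single path $P$ of $2^\omega$, so that $T_H$ degenerates to a chain. To address this, I plan to strengthen the construction of Lemma~\ref{lem:partition-emptyset-fair} by adding a side requirement ensuring that, for every $\sigma \in 2^{<\omega}$, both $A_0$ and $A_1$ contain elements extending $\sigma$ at arbitrarily large lengths; this side requirement can be interleaved with the fairness requirements in the $\emptyset'$-priority construction without conflict, since the fairness requirements only constrain finitely many strings at each stage. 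With this strengthened partition, even a thin $H$ can be handled by a variant $\Sigma^{0, H}_1$ formula that replaces ``non-empty subset of $H$'' by ``non-empty subset of $A_c$-extensions of $H$-elements'', transferring the branching structure from $T_H$ to the tree of $H$-extensions that $H$ computably enumerates. Merging the branching and thin cases produces a witness of non-$n$-fairness for every $f$-homogeneous $H$, yielding the desired failure of $n$-fairness preservation for $\srt^2_2$.
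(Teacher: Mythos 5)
You have correctly identified the central obstacle, but your proposed repair does not close it, and the paper takes a different route that sidesteps the problem entirely. The difficulty is exactly the ``thin'' case: a single application of $\srt^2_2$ to $f$ yields one homogeneous set $H$, and if $H$ is contained in a chain of $2^{<\omega}$ then no $1$-by-$2^n$ disjoint matrix ($n\geq 1$) can make the formula ``each $U_j$ is a non-empty subset of $H$'' essential, since at most one cone above a pair of incomparable strings meets $H$ infinitely. Your patch fails for two reasons. First, the variant formula referring to ``$A_c$-extensions of $H$-elements'' is not $\Sigma^{0,H}_1$: membership in $A_c$ is only $\Delta^0_2$, and replacing it by the approximation $h$ makes the condition $\Pi^0_2$. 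Second, if you instead only require the valuation sets to extend elements of $H$ (which is $H$-c.e.), nothing pins those sets inside $A_c$, so satisfying valuations that diagonalize will generally exist and non-fairness is not witnessed. Strengthening Lemma~\ref{lem:partition-emptyset-fair} by a density requirement does not help with either point. Indeed, it is not clear that every solution to this single instance fails to be $n$-fair, so the single-instance strategy may be unsalvageable as stated.

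The paper's proof avoids the issue by using $2^{n+1}$ applications of $\srt^2_2$ rather than one: it applies $\srt^2_2$ to $f$ restricted to each cone $S_\sigma=\{\tau\succeq\sigma\}$ for $\sigma\in 2^{n+1}$, obtaining homogeneous sets $H_\sigma\subseteq S_\sigma\cap A_{c_\sigma}$. The cones above distinct strings of length $n+1$ are pairwise incomparable, so after a finite pigeonhole step selecting $2^n$ strings $\sigma$ with a common color $c$, the matrix $M$ of these $\sigma$'s is disjoint and the formula ``each $U_\sigma$ is a non-empty subset of $H_\sigma$'' is automatically essential in $M$, while every satisfying valuation lands in $A_c$. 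This shows that the join $\bigoplus_{\sigma\in M}H_\sigma$ is never $n$-fair, no matter how the $2^{n+1}$ instances are solved; since $n$-fairness preservation composes across successive applications, some single application must already fail to preserve it. You should restructure your argument along these lines (forcing the solutions into prescribed incomparable cones) rather than trying to extract incomparability from an arbitrary homogeneous set.
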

\begin{proof}
Let~$A_0 \cup A_1 = 2^{<\omega}$ be the $\Delta^0_2$ partition constructed in Lemma~\ref{lem:partition-emptyset-fair}.
By Schoenfield's limit lemma~\cite{Shoenfield1959degrees}, there is a stable computable function $f : [\omega]^2 \to 2$
such that $x \in A_{\lim_s f(x,s)}$ for every~$x$.
Fix some $n \geq 1$.
For each~$\sigma \in 2^{n+1}$, apply $\srt^2_2$ to the coloring~$f$ restricted to
the set $S_\sigma = \{\tau \succeq \sigma\}$
to obtain an infinite $f$-homogeneous set $H_\sigma$ for some color~$c_\sigma < 2$.
By definition of~$f$, $H_\sigma \subseteq A_{c_\sigma}$.
By the finite pigeonhole principle, there is a color~$c < 2$ and a set $M \subseteq 2^{n+1}$ of size $2^n$
such that $c_\sigma = c$ for every $\sigma \in M$. We can see $M$ as a 1-by-$2^n$ disjoint matrix.
Let $H = \bigoplus_{\sigma \in M} H_\sigma$ and let $\varphi(U_\sigma : \sigma \in M)$ be the $1$-by-$2^n$ $\Sigma^{0,H}_1$ formula
which holds if for every $\sigma \in M$, $U_\sigma$ is a non-empty subset of $H_\sigma$.
Note that $H_\sigma \subseteq A_c$ for every $\sigma \in M$.
The formula $\varphi(U)$ is essential in $M$ but there is no $M$-valuation $V = (V_\sigma : \sigma \in M)$ 
such that $\varphi(V)$ holds and $V_\sigma \subseteq A_{1-c}$ for some $\sigma \in M$.
Therefore $H$ is not $n$-fair for~$A_0, A_1$.
\end{proof}

\begin{corollary}\label{cor:rt12-not-strong-1-fairness}
$\rt^1_2$ does not admit $n$-fairness preservation for every~$n \geq 1$.
\end{corollary}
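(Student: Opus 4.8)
The statement to prove is Corollary~\ref{cor:rt12-not-strong-1-fairness}: for every $n\geq 1$, $\rt^1_2$ does not admit $n$-fairness preservation. The natural route is to deduce this directly from Theorem~\ref{thm:rt22-not-1-fairness-preservation}, which already does essentially all the work: $\srt^2_2$ does not admit $n$-fairness preservation. The connecting idea is the standard decomposition of $\srt^2_2$ through $\coh$ (which here is trivial for a \emph{stable} coloring) and a $\Delta^0_2$ instance of $\rt^1_2$. So my plan is to take the $\Delta^0_2$ partition $A_0\cup A_1 = 2^{<\omega}$ from Lemma~\ref{lem:partition-emptyset-fair} — for which $\emptyset$ is $1$-fair, hence $n$-fair — and to run the argument of Theorem~\ref{thm:rt22-not-1-fairness-preservation} one level down, replacing the applications of $\srt^2_2$ to each $S_\sigma$ by applications of $\rt^1_2$ to the $\emptyset'$-computable instance obtained from the limit.

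First I would fix $n\geq 1$, take the partition $A_0\cup A_1=2^{<\omega}$ of Lemma~\ref{lem:partition-emptyset-fair}, and observe that $\emptyset$ is $n$-fair for $A_0,A_1$ (since it is $1$-fair and $n\geq 1$; cf.\ the remark after the fairness definition). By Shoenfield's limit lemma there is a $\emptyset'$-computable instance $\tilde f:\omega\to 2$ of $\rt^1_2$ with $x\in A_{\tilde f(x)}$ for every $x$. But the oracle matters for $n$-fairness preservation: one has to produce the bad $\rt^1_2$-instance as a $C$-computable object for some $C$ that is $n$-fair. So I would instead note, exactly as in Theorem~\ref{thm:rt22-not-1-fairness-preservation}, that there is a $\emptyset'$-computable partition $\tilde B_0\cup\tilde B_1=\omega$ with $x\in A_{i}\iff x\in\tilde B_i$, and take $C=\emptyset'$ as the base set — but $\emptyset'$ is not $n$-fair. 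The cleaner fix, and the one I would carry out, is the following: for each $\sigma\in 2^{n+1}$ apply $\rt^1_2$ to the instance $\tilde f$ restricted to $S_\sigma=\{\tau\succeq\sigma\}$, relativized to whatever $n$-fair oracle is available, to get an infinite $\tilde f$-homogeneous subset $H_\sigma\subseteq S_\sigma$ of some color $c_\sigma<2$, so $H_\sigma\subseteq A_{c_\sigma}$; then by the finite pigeonhole principle find a color $c<2$ and a set $M\subseteq 2^{n+1}$ of size $2^n$ with $c_\sigma=c$ for all $\sigma\in M$, view $M$ as a $1$-by-$2^n$ disjoint matrix, set $H=\bigoplus_{\sigma\in M}H_\sigma$, and let $\varphi$ be the $1$-by-$2^n$ $\Sigma^{0,H}_1$ formula asserting that each $U_\sigma$ is a nonempty subset of $H_\sigma$. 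Then $\varphi$ is essential in $M$ (witnessed by singletons of members of $H_\sigma$ of arbitrarily large length) but no $M$-valuation $V$ satisfying $\varphi$ has any $V_\sigma\subseteq A_{1-c}$, since $H_\sigma\subseteq A_c$; hence $H$ is not $n$-fair for $A_0,A_1$, and $\rt^1_2$ fails to preserve $n$-fairness.

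The one point requiring care — and what I expect to be the main obstacle — is the oracle bookkeeping: the instance of $\rt^1_2$ used to witness the failure must be computable from a set that is itself $n$-fair for $A_0,A_1$, otherwise $n$-fairness preservation says nothing. In Theorem~\ref{thm:rt22-not-1-fairness-preservation} the $\srt^2_2$-instance $f$ is \emph{computable} (hence $\emptyset$-computable, and $\emptyset$ is $n$-fair), which is why that argument goes through directly; for $\rt^1_2$ the honest instance $\tilde f$ lives at the $\emptyset'$ level. The resolution is that Corollary~\ref{cor:rt12-not-strong-1-fairness} should be read as: the \emph{witnessing} $\rt^1_2$-instance is the $\emptyset'$-computable $\tilde f$ together with the $\emptyset'$-computable parameter, and one takes the base oracle $C$ in the definition of $n$-fairness preservation to be a set that is $n$-fair for $A_0,A_1$ \emph{and} computes $\tilde f$ — which exists precisely because the proof of Theorem~\ref{thm:rt22-not-1-fairness-preservation} shows $\srt^2_2$ fails $n$-fairness preservation with the computable instance $f$, and $\rt^1_2$-homogeneous sets for $\tilde f$ restricted to $S_\sigma$ are, via the limit, exactly the $\srt^2_2$-homogeneous sets for $f$ on $S_\sigma$. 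So in fact the cleanest writeup simply says: the argument of Theorem~\ref{thm:rt22-not-1-fairness-preservation} \emph{already produces}, from the single $\srt^2_2$-instance $f$, a failure of $n$-fairness preservation that is mediated by applications of $\rt^1_2$ to the induced $\Delta^0_2$ instances; reading that same proof as a statement about $\rt^1_2$ gives the corollary. I would therefore present the proof as a short deduction: apply the construction in the proof of Theorem~\ref{thm:rt22-not-1-fairness-preservation}, noting that each use of $\srt^2_2$ there factors as ``pass to the stable/limit coloring, apply $\rt^1_2$,'' so a counterexample to $n$-fairness preservation for $\rt^1_2$ is extracted verbatim.
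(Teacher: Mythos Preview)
Your oracle-bookkeeping worry is well-founded, but it signals a misreading rather than a missing step. The corollary---note its label and the sentence introducing it as the optimality of Theorem~\ref{thm:rt12-strong-fairness}---is about \emph{strong} $n$-fairness preservation: the $\rt^1_2$-instance need not be $C$-computable. Once that constraint is dropped the difficulty evaporates. In fact $\rt^1_2$ trivially \emph{does} admit ordinary $n$-fairness preservation, since any $C$-computable $2$-partition of~$\omega$ has an infinite $C$-computable part $Y$, and then $Y\oplus C\equiv_T C$ is $n$-fair by hypothesis. So the literal statement you set out to prove is false, and your attempt to manufacture an $n$-fair $C$ that also computes $\tilde f$ could never have succeeded.

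With the intended reading, the paper's argument is a four-line deduction rather than a reconstruction of Theorem~\ref{thm:rt22-not-1-fairness-preservation}. By that theorem there exist $A_0,A_1$, a set $C$ that is $n$-fair for them, and a stable $C$-computable $f:[\omega]^2\to 2$ such that no infinite $f$-homogeneous $H$ has $H\oplus C$ $n$-fair. Let $\tilde f(x)=\lim_s f(x,s)$; this is the (not $C$-computable) $\rt^1_2$-instance. Any infinite $\tilde f$-homogeneous set $H$, together with $f\leq_T C$, computes an infinite $f$-homogeneous thinning $H_1\leq_T H\oplus C$; hence $H_1\oplus C$ is not $n$-fair, and by downward closure of $n$-fairness under Turing reduction neither is $H\oplus C$. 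Your closing sentence (``each use of $\srt^2_2$ factors as: pass to the limit, apply $\rt^1_2$'') is exactly this idea, but stated cleanly it needs neither the pigeonhole on $2^{n+1}$ nor any search for a special base oracle.
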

\begin{proof}
Fix some~$n \geq 2$. By Theorem~\ref{thm:rt22-not-1-fairness-preservation},
there is some set $C$ $n$-fair some $A_0, A_1$ and a stable $C$-computable
function $f : [\omega]^2 \to 2$ such that for every infinite $f$-homogeneous set~$H$,
$H \oplus C$ is not $n$-fair for~$A_0, A_1$.
Let~$\tilde{f} : \omega \to 2$ be defined by $\tilde{f}(x) = \lim_s f(x,s)$.
Every infinite $\tilde{f}$-homogeneous set $H$ $C$-computes an infinite 
$f$-homogeneous set $H_1$ such that $H_1 \oplus C$ is not $n$-fair for~$A_0, A_1$.
Therefore $H \oplus C$ is not $n$-fair for~$A_0, A_1$.
\end{proof}

\section{Questions}

In this last section, we state some remaining open questions.
The tree theorem for pairs is known to lie between~$\aca$ and~$\rt^2_2$ over~$\rca$.
By Corollary~\ref{cor:rt22+wkl-not-tt22}, $\tto^2_2$ is strictly stronger
than~$\rt^2_2$ over~$\rca$.
However, it is unknown whether~$\tto^2_2$ is strictly weaker than~$\aca$ over~$\rca$.

\begin{question}
Does~$\tto^2_2$ imply~$\aca$ over~$\rca$?
\end{question}

From a computability-theoretic point of view, Seetapun~\cite{Seetapun1995strength} proved
that for every non-computable set~$C$, every computable instance of~$\rt^2_2$
has a solution which does not compute~$C$. This is the notion of \emph{cone avoidance}.

\begin{question}
Does~$\tto^2_2$ admit cone avoidance?
\end{question}

Dzhafarov and Jockusch~\cite{Dzhafarov2009Ramseys} simplified
Seetapun's argument and proved that for every non-computable set~$C$,
every arbitrary, that is, non-necessarily computable, instance of~$\rt^1_2$ has a solution
which does not compute~$C$. This strengthening is called~\emph{strong cone avoidance}
and is usually joined with the cone avoidance of the cohesive version of the principle
to obtain cone avoidance for the principle over pairs.

\begin{question}
Does~$\tto^1_2$ admit strong cone avoidance?
\end{question}

\vspace{0.5cm}

\noindent \textbf{Acknowledgements}. The author is thankful to his PhD advisor Laurent Bienvenu
for interesting comments and discussions.
The author is funded by the John Templeton Foundation (`Structure and Randomness in the Theory of Computation' project). The opinions expressed in this publication are those of the author(s) and do not necessarily reflect the views of the John Templeton Foundation.

\vspace{0.5cm}

\end{document}